\newcommand {\N}{\mathbb{N}}
\newcommand {\R}{\mathbb{R}}
\newcommand {\Z}{\mathbb{Z}}
\newcommand{\Schw}{{\mathscr S}}
\newcommand{\wh}{\widehat}
\newcommand{\TD}{{\mathscr S'}}
\newcommand{\F}{{\mathscr F}}
\newcommand{\A}{{\mathcal A}}
\newcommand{\Rn}{\mathbb{R}^n}
\newcommand{\be}{\begin{equation}}
\newcommand{\ee}{\end{equation}}
\newtheorem{definition}{Definition}[section]
\newtheorem{theorem}{Theorem}[section]
\newtheorem{proposition}{Proposition}[section]
\newtheorem{ex}{Example}[section]
\newtheorem{lemma}{Lemma}[section]
\newtheorem{remark}{Remark}[section]
\numberwithin{equation}{section}
\newtheorem*{theorem*}{Theorem}
\begin{document}

\subjclass{46E35, 42C40.} 

\title[Weighted embedding theorems for radial spaces]{Weighted embedding theorems for radial Besov and Triebel-Lizorkin spaces}

\author{Pablo L.  De N\'apoli}
\address{IMAS (UBA-CONICET) and Departamento de Matem\'atica, Facultad de Ciencias Exactas y Naturales, Universidad de Buenos Aires, Ciudad Universitaria, 1428 Buenos Aires, Argentina}
\email{pdenapo@dm.uba.ar}

\author{Irene Drelichman}
\address{IMAS (UBA-CONICET) and Departamento de Matem\'atica, Facultad de Ciencias Exactas y Naturales, Universidad de Buenos Aires, Ciudad Universitaria, 1428 Buenos Aires, Argentina}
\email{irene@drelichman.com}

\author{Nicolas Saintier}
\address{ Instituto de Ciencias,  Universidad  Nacional de  Gral. Sarmiento, J. M. Gutierrez 1150, 
Los Polvorines, 1613 Provincia de Buenos Aires, Argentina and Departamento de Matem\'atica, Facultad de Ciencias Exactas y Naturales, Universidad de Buenos Aires, Ciudad Universitaria, 1428 Buenos Aires, Argentina}
\email{nsaintie@dm.uba.ar}

\thanks{Supported by ANPCyT under grant PICT 1675/2010, by CONICET under  grant PIP 1420090100230 and by Universidad de
Buenos Aires under grant 20020090100067. The authors are members of
CONICET, Argentina.}

\begin{abstract}
We study the continuity and compactness of embeddings for radial Besov and Triebel-Lizorkin spaces with weights in the Muckenhoupt class $A_\infty$. The main tool is a discretization in terms of an almost orthogonal wavelet expansion adapted to the radial situation.  
\end{abstract}

\keywords{embedding theorems, radial functions, Muckenhoupt weights, wavelet bases.}

\maketitle

\section{Introduction}

Weighted embedding theorems for smooth function spaces have beeen studied by many authors, mainly because they are a fundamental tool in the variational analysis of some nonlinear partial differential equations, for instance of degenerate or singular elliptic equations.  It is therefore natural to study  embedding results in the framework of Triebel-Lizorkin and Besov spaces, since these include many of the classical functional spaces. In the unweighted case, a fundamental result in this context is the embedding theorem of Jawerth \cite{J} and Franke \cite{F}, which generalizes the classical Sobolev embedding theorem. 

Weighted Besov and Triebel-Lizorkin spaces have also been  studied by many authors under different assumptions on the weights (see e.g. \cite{B, BPT, VR}). Embeddings of Besov and Triebel-Lizorkin spaces with Muckenhoupt's $\mathcal{A}_\infty$ weights were studied by  Haroske and Skrzypczak in  \cite{HS1, HS2} and Meyries and Veraar in \cite{MV2} (see also  \cite{MV} for an earlier work by the same authors in the case of power weights).

On the other hand, it is well known, since the pioneering works of Ni \cite{Ni} and Strauss \cite{St}, that many embedding results can be improved when one considers subspaces of radial functions. More precisely, by restricting ourselves to the subspace of radial functions, we can  recover, for instance, compactness properties of embeddings that are in general non-compact due to the action of some non-compact group of transformations such as the group of translations in $\R^n$ (see, e.g. \cite{Lions}). 
Notice that compact embeddings are a  fundamental  feature for the success of  variational methods in PDE. 
In the case of weighted embedding theorems one can also obtain a wider range of exponents for the admissible power weights in the radial situation (see e.g. \cite{DD6}).  

In the case of unweighted radial subspaces of Besov and Triebel-Lizorkin spaces,  Sickel and  Skrzypczak  \cite{SiSk, SiSk3} and Sickel,  Skrzypczak and  Vybiral \cite{SiSk2}  obtained compactness of the related embeddings and an extension of Strauss' radial lemma. Quantitative information in terms of entropy numbers for the  embeddings  was obtained by K\"uhn, Leopold, Sickel and Skrzypczak in \cite{KLSS2}. In these papers, the main tool is an atomic decomposition adapted to the  radial situation. 

Early results on embeddings for weighted radial Besov and Triebel-Lizorkin spaces can be found in Triebel's book \cite[Section 6.5.2]{T3}, where the weights considered are of the  special forms $w_\alpha(x)=(1+|x|^2)^{\alpha/2}$ with $\alpha \in \R$, and $w^\beta(x)=e^{|x|^\beta}$ with $|x|\ge 1$ and $0<\beta\le 1$ (see also references therein). 
However, to the authors knowledge, results on weighted radial Besov and Triebel-Lizorkin spaces for other important classes of weights, such as power weights or, more generally, Muckenhoupt weights,  were still missing in the literature. The first two authors recently showed in \cite{DD4} that the approach used by Meyries and Veraar  \cite{MV} to obtain embedding theorems with power weights can be improved to obtain a better range of admissible exponents in the radial case. In this work we consider embedding theorems for radial subspaces of  Besov and Triebel-Lizorkin spaces with general   $\mathcal{A}_\infty$ weights. It is important to stress that in the latter case the functions considered are radially symmetric, but  the weights can be arbitrary. In the Triebel-Lizorkin case, we follow an argument by Meyries and Veraar \cite{MV2} to derive the embeddings from the Besov case, but this time restrict ourselves not only to radially symmetric functions but also to radially symmetric $\mathcal{A}_\infty$ weights (see the discussion in Section \ref{section-TL}). In both cases we obtain sufficient conditions for the continuity and compactness of the embeddings that improve with respect to the non-radial case. 

For our proof, instead of using the atomic decomposition for radial subspaces of Sickel and  Skrzypczak   \cite{SiSk}, we shall follow closely the approach used by Haroske and  Skrzypczak \cite{HS1, HS2} in the non-radial case, which is based on a discretization in terms of wavelet bases. To this end, we need a wavelet decomposition adapted to the radial situation, which we obtain by adapting arguments used by Epperson and  Frazier \cite{EF} in the unweighted radial case. We remark that this is not a wavelet decomposition in the traditional way, since the wavelets are localized near certain annuli instead of cubes. Hence, they have the advantage of being better adapted to the radial situation but have no translation structure and, more importantly, since they are not actual bases but rather frames, they do not characterize the (weighted) Besov and Triebel-Lizorkin spaces. In other words, they are useful to obtain sufficient conditions for the continuity and compactness of the embeddings, but cannot be used to prove sharpness of the conditions obtained.  Unfortunately, as far we know, there are no known orthogonal wavelet decompositions for radial functions except in dimension three (see, e.g. \cite{RR, CP}).

The rest of the paper is as follows. In Section 2 we recall some definitions and known properties of Besov and Triebel-Lizorkin spaces. Section 3 in devoted to the construction of the wavelet bases and the representation of  the weighted radial Besov and Triebel-Lizorkin spaces in terms of sequence spaces (Theorems \ref{wavelet-TL} and \ref{wavelet-characterization-theorem}). In Section 4 we prove our main theorem (Theorem \ref{main-theorem}) on sufficient conditions for the continuity and compactness of the embeddings for weighted radial Besov spaces and use it to analyze some important special examples. Finally, in Section 5 we obtain sufficient conditions for the continuity and compactness of the embeddings for Triebel-Lizorkin spaces with radial $\mathcal{A}_\infty$ weights (Theorem \ref{main-theorem-TL}) and an example in this case. 

\section{Weighted Besov and Triebel-Lizorkin spaces\label{subsec:defBF}}

First we recall some necessary definitions.  For classical references on 
Besov and Triebel-Lizorkin spaces see \cite{P, Tri83}.  For weighted versions see \cite{BPT, VR}.

\medskip

\begin{definition}[Construction of the Littlewood-Paley partition]\label{def:Phi}
Let $\varphi \in \Schw(\Rn)$ be such that
\begin{equation}\label{eq:propvarphi}
0\leq \wh{\varphi}(\xi)\leq 1, \quad  \xi\in \Rn, \qquad  \wh{\varphi}(\xi) = 1 \ \text{ if } \ |\xi|\leq 1, \qquad  \wh{\varphi}(\xi)=0 \ \text{ if } \ |\xi|\geq \frac32.
\end{equation}
Let $\wh{\varphi}_0 = \wh{\varphi}$, $\wh{\varphi}_1(\xi) = \wh{\varphi}(\xi/2) - \wh{\varphi}(\xi)$, and
\[\wh{\varphi}_\mu(\xi) = \wh{\varphi}_1(2^{-\mu+1} \xi) = \wh{\varphi}(2^{-\mu}\xi) - \wh{\varphi}(2^{-\mu+1}\xi),  
 \qquad \xi\in \Rn, \qquad  \mu\geq 1.\]
Then 
$$0\le \wh{\varphi}_\mu(\xi)\le 1, \quad  \wh{\varphi}_\mu(\xi) = 1 \ \text{ if  }\ \frac{3}{2}2^{\mu-1}\le|\xi|\le 2^\mu,
   \quad \text{supp}\,\wh{\varphi}_\mu\subset\Big\{2^{\mu-1}\le|\xi|\le \frac{3}{2}2^\mu\Big\}.$$ 
Let $\Phi$ be the set of all sequences $(\varphi_\mu)_{\mu\geq 0}$ constructed in the above way from a function $\varphi$ that satisfies \eqref{eq:propvarphi}.
\end{definition}

For  $\varphi$ as in the definition and $f\in \TD(\Rn)$ one sets
\[S_\mu f := \varphi_\mu * f = \F^{-1} [\wh{\varphi}_\mu \wh{f}],\]
which belongs to $C^\infty(\Rn)\cap \TD(\Rn)$. 
Since $\sum_{\mu\geq 0} \wh{\varphi}_\mu(\xi) = 1$ for all $\xi\in \Rn$, 
we have $\sum_{\mu\geq 0} S_\mu f =  f$ in the sense of distributions.

\medskip

Given a weight $w$, that is a non-negative locally integrable function on $\Rn$, and a real number $p\in [1,+\infty]$, we denote by $L^p(\R^n,w)$ the weighted Lebesgue space defined as the space of those measurable functions $f:\Rn\to \R$ such that 
$$ \|f\|^p_{L^p(\Rn,w)}:= \int_{\Rn} |f|^p\,w(x)\,dx <\infty $$ 
 if $1\le p<+\infty$, and $\|f\|_{L^\infty(\Rn, w)}=\|f\|_{L^\infty(\Rn)}$. 

Let us recall that, for $1<p< + \infty$, the Muckenhoupt class $\A_p$ is the class of weights  $w$ for which the maximal Hardy-Littlewood operator is bounded from $L^p(\Rn,w)$ to itself, and that it can be characterized by the condition
$$
\left( \frac{1}{|B|} \int_B w \right) \left( \frac{1}{|B|} \int_B w^{1-p'}\right)^{p-1} \le C
$$
for all balls $B \subseteq \Rn$, where the constant $C$ depends on $w$ but is independent of $B$. On the other hand, we say that $w\in \A_1$ if $Mw(x)\le C w(x)$ a.e., and we set $\A_\infty=\bigcup_{p\ge 1} \A_p$. We refer to \cite{CF} for a detailed account of these weights. 

\medskip

Given real numbers $p,q\in [1,\infty]$, $s\in\R$ and a weight $w\in \mathcal{A}_\infty$, we can  define following \cite{B} the weighted Besov and Triebel-Lizorkin spaces $B_{p,q}^s (\Rn,w)$ and $F_{p,q}^s (\Rn,w)$ in the following way

\begin{definition}\label{def:Besov}
The (inhomogeneous) Besov space $B_{p,q}^s(\Rn,w)$ is defined as the space of all  $f\in {\mathscr S}'(\Rn)$ for which 
\[ \|f\|_{B_{p,q}^s (\Rn,w)} := \Big( \sum_{\mu\ge 0} 2^{q\mu s} \|S_\mu f\|_{L^p(\Rn,w)}^q \Big)^{1/q}
                                < \infty.\]
with the usual modifications for $q = \infty$.
\end{definition}

\begin{definition}\label{def:Triebel-Lizorkin}
Assume that $p<\infty$. 
The (inhomogeneous)  Triebel-Lizorkin space $F_{p,q}^s(\Rn,w)$ is defined as the space of all 
$f\in {\mathscr S}'(\Rn)$ for which
\[ \|f\|_{F_{p,q}^s (\Rn,w)}    := \Big\|\Big(\sum_{\mu\geq 0} 2^{q\mu s}|S_\mu f|^q\Big)^{1/q}\Big\|_{L^p(\Rn,w)}<\infty.\]
with the usual modifications for $q = \infty$.
\end{definition}

\begin{remark}
\begin{enumerate}
\item It can be proved that these definitions do not depend on the choice of the particular $\varphi$ in (\ref{eq:propvarphi}), see e.g. \cite{BPT}. 
 \item  The corresponding homogeneous spaces denoted by $\dot B_{p,q}^s(\Rn,w)$ and $\dot F_{p,q}^s(\Rn,w)$ are defined in a similar way with the sum running over $\Z$ with appropriate modifications of the partition of unity. Observe that $\|f\|_{B_{p,q}^s (\Rn,w)}=0$ if and only if $\text{supp}\,\hat f=\{0\}$, i.e., $f$ is a polynomial. 
 For this reason it is usual to consider instead the quotient spaces $\dot B_{p,q}^s(\Rn,w)/\mathcal{P}$ and $\dot F_{p,q}^s(\Rn,w)/\mathcal{P}$ where $\mathcal{P}$ is the space of polynomials.
\item If $w\equiv 1$, we write $B_{p,q}^s(\Rn)$ instead of $B_{p,q}^s(\Rn,w)$ and $F_{p,q}^s(\Rn)$ instead of $F_{p,q}^s(\Rn,w)$. 
\end{enumerate} 
\end{remark} 

\medskip

The group $O(n)$ of $\Rn$ acts on $\Schw(\Rn)$ by $(\sigma,\phi)\in O(n)\times \Schw(\Rn)\to \sigma\phi\in \Schw(\Rn)$ with $\sigma\phi(x):=\phi(\sigma^{-1}x)$. 
Then, for any $f,\phi\in \Schw(\Rn)$ and $\sigma\in O(n)$ there holds that $(\sigma.f,\phi)_{L^2}=(f,\sigma^{-1}\phi)_{L^2}$. We thus define the action of $O(n)$ on $\TD(\Rn)$ by $(\sigma,f)\in O(n)\times \TD(\Rn)\to \sigma.f\in \TD(\Rn)$ with 
\begin{equation}\label{Action}
 (\sigma.f,\phi):= (f,\sigma^{-1}\phi) \qquad \text{ for any } \phi\in\Schw(\Rn). 
\end{equation}
This motivates our next definition:

\begin{definition}\label{def:RadialDistrib}
We say that a tempered distribution $f\in \TD(\Rn)$ is radial if $\sigma.f=f$ for any $\sigma\in O(n)$ where $\sigma.f$ is defined by \eqref{Action}. 
\end{definition} 

\noindent The Besov and Triebel-Lizorkin spaces of radial distributions will be denoted by $RB_{p,q}^s(\Rn,w)$ and $RF_{p,q}^s(\Rn,w)$, respectively. The following embeddings between these spaces are elementary and follow from the corresponding non-radial situation (see \cite[Theorem 2.6]{B}).
 
\begin{theorem}
\label{elementary-embeddings}
Let $w\in A_\infty$. Then
\begin{enumerate}
\item For all $1\le q_1 \le q_2 \le \infty$ and $s\in \R$ one has
$$
RB_{p,q_1}^s(\R^n, w) \hookrightarrow RB_{p,q_2}^s(\R^n, w), \quad p\in [1, \infty];
$$
$$
RF_{p,q_1}^s(\R^n, w) \hookrightarrow RF_{p,q_2}^s(\R^n, w), \quad p\in [1, \infty].
$$
\item For all $q_1, q_2 \in [1, \infty]$, $s\in \R$ and $\varepsilon >0$ one has
$$
RB_{p,q_1}^{s+\varepsilon}(\R^n, w) \hookrightarrow RB_{p,q_2}^s(\R^n, w), \quad p\in [1, \infty];
$$
$$
RF_{p,q_1}^{s+\varepsilon}(\R^n, w) \hookrightarrow RF_{p,q_2}^s(\R^n, w), \quad p\in [1, \infty].
$$
\item  For all $q \in [1, \infty]$, $s\in \R$ and $p\in [1, \infty)$ one has
$$
RB_{p,\min\{p,q\}}^{s}(\R^n, w) \hookrightarrow RF_{p,q}^s(\R^n, w)  \hookrightarrow RB_{p,\max\{p,q\}}^{s}(\R^n, w).
$$
\end{enumerate}
\end{theorem}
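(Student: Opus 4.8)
The plan is to deduce each of (1)--(3) directly from the corresponding non-radial embeddings in \cite[Theorem 2.6]{B}, observing that the passage to radial subspaces costs nothing. The starting point is that, by Definition~\ref{def:RadialDistrib}, the space $RB_{p,q}^s(\Rn,w)$ is precisely the subspace of $B_{p,q}^s(\Rn,w)$ consisting of those $f\in\TD(\Rn)$ with $\sigma.f=f$ for all $\sigma\in O(n)$, equipped with the restriction of the ambient norm (and likewise for $RF_{p,q}^s$). Crucially, all the embeddings in \cite[Theorem 2.6]{B} are realized by the identity map on $\TD(\Rn)$: they assert inclusions of distribution spaces without altering the underlying distribution.

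First I would fix an element $f\in RB_{p,q_1}^s(\Rn,w)$, so that $f\in B_{p,q_1}^s(\Rn,w)$ and $f$ is radial. By the non-radial embedding of \cite[Theorem 2.6]{B} one obtains $f\in B_{p,q_2}^s(\Rn,w)$ together with a bound $\|f\|_{B_{p,q_2}^s(\Rn,w)}\le C\,\|f\|_{B_{p,q_1}^s(\Rn,w)}$ for a constant $C$ independent of $f$. Since the identity map does not change $f$, the distribution $f$ remains radial, hence $f\in RB_{p,q_2}^s(\Rn,w)$; and because the radial norm is by definition the restriction of the full norm, the same inequality reads $\|f\|_{RB_{p,q_2}^s(\Rn,w)}\le C\,\|f\|_{RB_{p,q_1}^s(\Rn,w)}$. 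This establishes the first embedding in (1), and the identical reasoning applied to the remaining non-radial inclusions in \cite[Theorem 2.6]{B} yields the Triebel-Lizorkin embedding in (1), both lift-of-smoothness embeddings in (2), and the chain $RB_{p,\min\{p,q\}}^s\hookrightarrow RF_{p,q}^s\hookrightarrow RB_{p,\max\{p,q\}}^s$ in (3).

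I do not expect any genuine obstacle here, since the statement is a formal corollary of the unrestricted theory. The only point deserving a word of care is that radiality, as encoded by the $O(n)$-action of \eqref{Action}, is intrinsic to $\TD(\Rn)$ and therefore insensitive to the particular Besov or Triebel-Lizorkin scale in which $f$ is viewed; this is exactly what guarantees that the identity inclusions preserve the radial subspaces and that no embedding constant is lost or gained in the passage from the full spaces to their radial counterparts.
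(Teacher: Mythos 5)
Your proposal is correct and follows exactly the paper's own route: the paper proves this theorem simply by invoking the non-radial embeddings of \cite[Theorem 2.6]{B} and observing, as you do, that they restrict to the radial subspaces since each embedding is the identity on $\TD(\Rn)$ and hence preserves $O(n)$-invariance. Your write-up merely makes explicit the (correct) bookkeeping that the paper leaves implicit.
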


We now state a weighted version due to \cite{B} of the continuity of Peetre maximal function  originally defined in \cite{P2}. 

Let $a>0$ and $\{ \phi_\mu\}_{\mu\ge 0} $ be a sequence of functions in $\Schw(\Rn)$ such that 
$$  \hbox{supp}\, \widehat{\phi_\mu} \subset \{ 2^{\mu-a} \leq |\xi| \leq 2^{\mu+a}\}, $$
and
$$ |D^\alpha \widehat{\phi_\mu}(\xi)| \leq C_n 2^{-\mu|\alpha|}  \quad\text{for all } \mu\ge 0,\,\alpha\in\N^d,\, \xi\in\Rn. $$
This holds e.g. if $\widehat{\phi_\mu}(\xi)=\widehat\phi_1(2^{-\mu}\xi)$. 
For a given $\lambda>0$ the Peetre maximal functions of $f \in \Schw^\prime(\Rn)$ are 
\begin{equation}\label{DefPeetre}   
 \phi_{\mu,\lambda}^* f(x) = \phi_\mu^* f(x) = \sup_{y \in \R^n} \frac{|\phi_\mu * f(x-y)|}{(1+2^\mu|y|)^{\lambda}},
\quad   x\in\Rn,\,\mu\ge 0.
\end{equation}

\begin{theorem}\cite[Section 5]{B}
\label{peetre-maximal}
Let $r_0=\inf\{ r : w\in A_r \}$.  
\begin{enumerate}
\item[i)] If $\lambda>\max\{\frac{n r_0}{p},\frac{n}{q}\}$ then
\begin{equation}\label{ContMaxPeetre} 
 \left \| \left( \sum_{\mu\ge 0} [2^{\mu s}\phi_\mu^* f(x)]^q \right)^\frac{1}{q} \right \|_{L_p(\Rn,w)} \leq C \| f \|_{ F_{p,q}^{s}(\Rn, w)}
   \quad \text{for all } f \in \Schw^\prime(\Rn). 
\end{equation} 
\item[ii)] If $\lambda> \frac{nr_0}{p}$ then
$$ 
\left( \sum_{\mu\ge 0} [2^{\mu s} \| \phi_\mu^* f \|_{L_p(\Rn, w)}]^q \right)^\frac{1}{q} \leq C \| f \|_{  B_{p,q}^{s}(\Rn, w)} 
 \quad \text{for all } f \in \Schw^\prime(\Rn). 
 $$
\end{enumerate}
\end{theorem}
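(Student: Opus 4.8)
The plan is to reduce both estimates to the Hardy--Littlewood maximal operator $M$, the admissible exponents being governed by an auxiliary power $t>0$ whose range is dictated by $\lambda$, $p$, $q$ and $r_0$. The technical core is a pointwise majorant for band-limited functions: if $g\in\TD(\Rn)$ satisfies $\text{supp}\,\wh g\subset\{|\xi|\le 2^{\mu+a}\}$ and $\lambda>n/t$, then
\[
\sup_{y\in\Rn}\frac{|g(x-y)|}{(1+2^\mu|y|)^\lambda}\le C\,\bigl(M|g|^t(x)\bigr)^{1/t}.
\]
To prove this I would fix $\Psi\in\Schw$ with $\wh\Psi\equiv 1$ on $\{|\xi|\le 2^a\}$ and supported in $\{|\xi|\le 2^{a+1}\}$, so that, with $\Psi_\mu(z)=2^{\mu n}\Psi(2^\mu z)$, the spectral support of $g$ gives the reproducing identity $g=g*\Psi_\mu$. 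Writing $g(x-y)=\int g(x-u)\Psi_\mu(u-y)\,du$, using the submultiplicativity $(1+2^\mu|u|)\le (1+2^\mu|y|)(1+2^\mu|u-y|)$ together with the Schwartz decay of $\Psi_\mu$, and peeling off a factor $|g(x-u)|^{1-t}$ bounded by the Peetre maximal function itself, one is led to $g^*_\mu(x)^t\le C\,M|g|^t(x)$. Here the hypothesis $\lambda t>n$ is exactly what makes the kernel $2^{\mu n}(1+2^\mu|\cdot|)^{-\lambda t}$ integrable, so that its convolution with $|g|^t$ is controlled by $M|g|^t$; the a priori finiteness of $g^*_\mu$ needed to cancel the factor $g^*_\mu(x)^{1-t}$ follows again from band-limitation, and the case $t>1$ reduces to a small $t_0\le 1$ via Jensen's inequality $M|g|^{t_0}\le (M|g|^t)^{t_0/t}$.

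Next I would transfer from the given system $\{\phi_\mu\}$ to the Littlewood--Paley system $\{\varphi_\nu\}$ defining the norms. Since $\text{supp}\,\wh{\phi_\mu}$ and $\text{supp}\,\wh{\varphi_\nu}$ are annuli of radius comparable to $2^\mu$ and $2^\nu$, only finitely many scales interact: $\phi_\mu*f=\sum_{|\nu-\mu|\le\kappa}\phi_\mu*S_\nu f$ for a fixed $\kappa=\kappa(a)$. The derivative bounds on $\wh{\phi_\mu}$ give, by integration by parts, the uniform decay $|\phi_\mu(z)|\le C_N 2^{\mu n}(1+2^\mu|z|)^{-N}$; moving this kernel outside and reusing the submultiplicativity trick yields $\phi^*_\mu f(x)\le C\sum_{|\nu-\mu|\le\kappa}S^*_\nu f(x)$, where $S^*_\nu f$ denotes the Peetre maximal function of $S_\nu f$ at scale $2^\nu$. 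Applying the band-limited lemma to each $g=S_\nu f$ then gives the key pointwise estimate
\[
\phi^*_\mu f(x)\le C\sum_{|\nu-\mu|\le\kappa}\bigl(M|S_\nu f|^t(x)\bigr)^{1/t}.
\]

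It remains to insert this into the two norms and to fix $t$. For part (ii) I would take the $L^p(\Rn,w)$ norm and use the boundedness of $M$ on $L^{p/t}(\Rn,w)$, valid once $p/t>1$ and $w\in\A_{p/t}$, i.e. $p/t>r_0$; this gives $\|(M|S_\nu f|^t)^{1/t}\|_{L^p(\Rn,w)}\le C\|S_\nu f\|_{L^p(\Rn,w)}$, after which summing in $\mu$ in $\ell^q$ (using $2^{\mu s}\sim 2^{\nu s}$ and the finite overlap) produces $\|f\|_{B^s_{p,q}(\Rn,w)}$. Only $t\in(n/\lambda,\,p/r_0)$ is required, a nonempty range exactly when $\lambda>nr_0/p$. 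For part (i) I would absorb the factor $2^{\mu s}$ into the maximal function, $2^{\mu s}(M|S_\nu f|^t)^{1/t}=(M|2^{\nu s}S_\nu f|^t)^{1/t}$ up to the harmless constant $2^{(\mu-\nu)s}$, and then apply the Fefferman--Stein vector-valued maximal inequality for Muckenhoupt weights with exponents $P=p/t$, $Q=q/t$ and $w\in\A_P$. This requires $P>1$, $Q>1$ and $w\in\A_{p/t}$, i.e. $t\in(n/\lambda,\,\min\{q,p/r_0\})$, which is nonempty exactly when $\lambda>\max\{nr_0/p,\,n/q\}$; the right-hand side again collapses to $\|f\|_{F^s_{p,q}(\Rn,w)}$.

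I expect the band-limited pointwise majorant to be the main obstacle, since it is the only place where the spectral localization is used essentially and where the threshold $\lambda>n/t$ is produced; the self-improving cancellation of $g^*_\mu(x)^{1-t}$ (and hence the a priori finiteness argument) is the delicate point. By contrast, the weighted step is routine once the $\A_\infty$ machinery --- boundedness of $M$ on $L^r(\Rn,w)$ for $w\in\A_r$ and the vector-valued maximal inequality for Muckenhoupt weights --- is available, the only real bookkeeping being that the interval of admissible $t$ is nonempty precisely under the stated conditions on $\lambda$.
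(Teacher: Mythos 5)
The paper does not actually prove this theorem --- it is quoted from Bui \cite[Section 5]{B} --- so the comparison is with the standard proof in the literature, and your proposal follows exactly that route: the pointwise majorization $g^*_{\mu,\lambda}(x)\le C\,(M|g|^t(x))^{1/t}$ for band-limited $g$ with $\lambda t>n$, transfer to the Littlewood--Paley blocks via finite spectral overlap, then the weighted scalar maximal inequality for the Besov case and the Andersen--John/Kokilashvili vector-valued inequality for the Triebel--Lizorkin case. Your final bookkeeping is also exactly right: $t\in(n/\lambda,\,p/r_0)$ is nonempty precisely when $\lambda>nr_0/p$, and $t\in(n/\lambda,\,\min\{q,p/r_0\})$ precisely when $\lambda>\max\{nr_0/p,\,n/q\}$ (the case $q=\infty$ being harmless since $\sup_\nu Mf_\nu\le M(\sup_\nu|f_\nu|)$).

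There is, however, a genuine gap in your proof of the core lemma: the case $t>1$, which you cannot avoid. Your Jensen reduction runs backwards: from the lemma at some $t_0\le 1$ and $(M|g|^{t_0})^{1/t_0}\le (M|g|^{t})^{1/t}$ you can conclude $g^*_{\mu,\lambda}\le C(M|g|^t)^{1/t}$ only under the hypothesis $\lambda>n/t_0\ge n$, not under the needed hypothesis $\lambda>n/t$. And the theorem does force $t>1$ on a nonempty range of parameters: take $n=3$, $p=q=3$, $w\in\A_1$ (so $r_0=1$); then $\lambda=3/2$ is admissible, and every admissible $t$ must lie in $(n/\lambda,\min\{q,p/r_0\})=(2,3)$. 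The peeling argument itself also breaks down there, since for $t>1$ the factor $|g(x-u)|^{1-t}$ has a negative exponent and cannot be bounded by a power of the maximal function. The standard repair is to prove the $t>1$ case directly by H\"older's inequality: split $|\Psi_\mu(u-y)|$ into a power $1/t$ carrying a weight $(1+2^\mu|u-y|)^{a}$ and a power $1/t'$ carrying $(1+2^\mu|u-y|)^{-at'/t}$; choosing $N-a\in(n,\lambda t]$, submultiplicativity and the integrability of $2^{\mu n}(1+2^\mu|\cdot|)^{-(N-a)}$ give the bound with no self-improvement step at all --- here $\lambda t>n$ reappears as the only constraint. A second, lesser soft spot: your claim that the a priori finiteness of $g^*_{\mu,\lambda}(x)$ ``follows from band-limitation'' is not quite right, because band-limited tempered distributions have polynomial growth of some order $m$ depending on $g$, and finiteness is automatic only for $\lambda\ge m$, which may exceed the given $\lambda$. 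The usual fixes are to run the absorption with the truncated maximal functions $\sup_{|y|\le R}$ and constants uniform in $R$, or to use a Rychkov-type iteration over scales which never forms the possibly infinite quantity. With these two repairs your argument is the standard (and, up to presentation, the cited) proof.
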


\section{Construction of radial wavelets for weighted Besov and Triebel-Lizorkin spaces}

In this section we develop a suitable wavelet decomposition adapted to the weighted radial situation. Our starting point is the construction of radial wavelets of Epperson and Frazier \cite{EF}.

Let $\Phi,\Psi,\varphi, \psi \in \Schw (\Rn)$ be radial functions such that 
$$ \text{supp}\,\hat\Phi,\,\text{supp}\,\hat\Psi \subset \{|\xi|\le 1\},\qquad |\hat\Phi(\xi)|,\, |\hat\Psi(\xi)|\ge c>0 \text{ if } |\xi|\le 5/6, $$ 
$$\hbox{supp} \; \widehat{\varphi},\widehat{\psi} \subset \{ 1/4  < |\xi| < 1\}, \qquad |\widehat{\varphi}|,|\widehat{\psi}|\geq c >0 
\text{ if } 3/10 \leq |\xi| \leq 5/6, $$
and 
$$ \overline{\hat\Phi}(\xi)\hat\Psi(\xi) 
 + \sum_{\mu\ge 1} \; \overline{\widehat{\varphi_\mu}}(\xi) \; {\widehat{\psi_\mu}}(\xi) = 1 
\quad  \hbox{for} \; \xi \neq 0. $$
where $\varphi_{\mu}(x)= 2^{\mu n} \varphi(2^\mu x)$ and $\psi_{\mu}(x)= 2^{\mu n} \psi(2^\mu x)$. 
We then define a family of functions $(\varphi_{\mu k})_{\mu\ge 0,k\ge 1}$  by 
$$ \varphi_{\mu k} = 
\begin{cases} 
\left( \frac{\displaystyle 2^{(\mu(n-2)+1)}}{\displaystyle  j^n_{\nu,k} J_{\nu+1}^2(j_{\nu,k})
\omega_{n-1}} \right)^{1/2} \varphi_\mu * d\sigma_{2^{-\mu} j_{\nu,k}} \quad \text{ for } \mu\ge 1, \\
\left( \frac{\displaystyle 2}{\displaystyle j^n_{\nu,k} J_{\nu+1}^2(j_{\nu,k})
\omega_{n-1}} \right)^{1/2} \Phi * d\sigma_{2^{-\mu} j_{\nu,k}} \quad \text{ for } \mu =0,
\end{cases} 
$$ 
where $d\sigma_t$ denotes the (unnormalized) surface Lebesgue measure on the sphere of radius $t$ in $\Rn$, $\omega_{n-1}$ the surface of the unit sphere, and 
$$ 0 < j_{\nu,1} < j_{\nu,2} < \ldots < j_{\nu,k} < \ldots  $$
denote the positive zeros of the Bessel function $J_\nu$ with $\nu=(n-2)/2$.  
We define in a similiar way the functions $(\varphi_{\mu k})_{\mu\ge 0,k\ge 1}$.  
Then the Epperson-Frazier  wavelet expansion for a radial  distribution $f \in \Schw^\prime(\R^n)$ is given by
$$ f = \sum_{\mu \ge 0} \sum_{k\ge 1} \; \langle f, \varphi_{\mu,k} \rangle \psi_{\mu,k}. $$

Epperson and Frazier were  able to characterize the membership of $f$ to  (unweighted) Besov or Triebel-Lizorkin spaces in terms of the wavelet coefficients $\langle f, \varphi_{\mu,k} \rangle$. Our purpose in this section is to show that analogous results hold for the weighted version of these spaces when the weight belongs to the $\mathcal{A}_\infty$ class. 

\medskip

We consider the annuli $A_{\mu,k}$, $\mu\ge 0$, $k\ge 1$, defined by 
$$ A_{\mu,k} = \{x\in\Rn,\,2^{-\mu}j_{\nu,k-1}\le |x|\le 2^{-\mu}j_{\nu,k}\} \quad\text{ with } j_{\nu,0}=0, $$ 
and denote by $\chi_{\mu,k}:=|A_{\mu,k}|^{-1/2}\chi_{A_{\mu,k}}$ its $L^2$-normalized characteristic function. 
Given real numbers $p,q\in [1,\infty]$, $s\in\R$ and a weight $w\in \mathcal{A}_\infty$ we let $b_{p,q}^{s}(w)$ and 
$ f_{p,q}^s(w)$ be the spaces of sequences of complex numbers $\lambda:=(\lambda_{\mu,k})_{\mu,k}$ such that 
$$  
\|\lambda\|_{ b_{p,q}^{s}(w)}:= 
\Big( \sum_{\mu\ge 0}\Big\| \sum_{k\ge 1} 2^{\mu s}|\lambda_{\mu,k}|\chi_{\mu,k}\Big\|_{L_p(\Rn,w)}^q \Big)^\frac{1}{q}<\infty,  
$$ 
and
$$  \|\lambda\|_{ f_{p,q}^s(w)}:= 
\Big\| \Big ( \sum_{\mu\ge 0}  \sum_{k\ge 1} [2^{\mu s}|\lambda_{\mu,k}|\chi_{\mu,k}]^q \Big)^\frac{1}{q} \Big\|_{L^p(\Rn,w)}
<\infty $$
respectively, with the usual modifications if $q=\infty$.

\medskip

Our first result is the following:

\begin{theorem} 
\label{wavelet-TL}
Let $p,q\in [1, \infty]$ and $w\in \mathcal{A}_\infty$. Then, the operators 
$$ S:f\in  RF_{p,q}^{s}(\Rn, w) \to (\langle f, \varphi_{\mu,k} \rangle)_{\mu,k}\in  f_{p,q}^{s}(w) $$ 
and 
$$ T:\lambda\in  f_{p,q}^{s}(w)  \to \sum_{\mu\ge 0}\sum_{k\ge 1} \lambda_{\mu,k}\psi_{\mu,k} \in  RF_{p,q}^{s}(\Rn, w) $$ 
are bounded, and the composition  $T \circ S$ is the identity on $RF_{p,q}^{s}(\Rn, w)$. In particular, $\|f\|_{RF_{p,q}^s(w)} \simeq \|S(f)\|_{f_{p,q}^s(w)}$. 
\end{theorem}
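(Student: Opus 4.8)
The plan is to establish the boundedness of the synthesis operator $T$ and of the analysis operator $S$ separately, and then verify the reproducing identity $T\circ S=\mathrm{Id}$, which will follow immediately from the Epperson--Frazier wavelet expansion for radial distributions recalled above. The heart of the matter is that the wavelets $\varphi_{\mu,k}$ and $\psi_{\mu,k}$ are built from $\varphi_\mu,\psi_\mu$ convolved with the surface measure $d\sigma_{2^{-\mu}j_{\nu,k}}$ on the sphere of radius $2^{-\mu}j_{\nu,k}$, so each $\psi_{\mu,k}$ is a smooth molecule essentially concentrated on the annulus $A_{\mu,k}$ with frequency support in $\{2^{\mu-1}\lesssim|\xi|\lesssim 2^{\mu+1}\}$. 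I would first record these localization estimates: using the known decay of $\widehat{d\sigma_t}$ and the Schwartz decay of $\varphi_\mu,\psi_\mu$, one gets pointwise bounds showing $|\psi_{\mu,k}(x)|$ decays rapidly away from the annulus $A_{\mu,k}$ (measured in the natural dilated distance $2^\mu|\,|x|-2^{-\mu}j_{\nu,k}|$), together with the frequency-support condition $\mathrm{supp}\,\widehat{\psi_{\mu,k}}\subset\{2^{\mu-a}\le|\xi|\le 2^{\mu+a}\}$ for a fixed $a$. The $L^2$-normalization constants in the definition of $\varphi_{\mu,k}$ are exactly those that make $\|\psi_{\mu,k}\|_{L^2}\simeq 1$ and that pair correctly against $\chi_{\mu,k}=|A_{\mu,k}|^{-1/2}\chi_{A_{\mu,k}}$, so these constants should drop out cleanly once the geometry is set up.

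For the boundedness of $T$, I would expand $S_\nu\big(\sum_{\mu,k}\lambda_{\mu,k}\psi_{\mu,k}\big)$ and exploit the frequency localization to see that $S_\nu\psi_{\mu,k}=0$ unless $|\nu-\mu|\le a+1$, reducing the analysis to a fixed finite band around each scale. Within a scale, the rapidly decaying tails of $\psi_{\mu,k}$ let me dominate $|S_\nu f(x)|$ by a convolution, in the radial variable, of the sequence $(2^{\mu s}|\lambda_{\mu,k}|)_k$ against a summable kernel, so that pointwise
\[
\Big(\sum_{\nu\ge 0}[2^{\nu s}|S_\nu(Tf)(x)|]^q\Big)^{1/q}
\lesssim \Big(\sum_{\mu\ge 0}\sum_{k\ge 1}[2^{\mu s}|\lambda_{\mu,k}|\chi_{\mu,k}(x)]^q\Big)^{1/q}
\]
after absorbing the tails by a maximal-function bound. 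Here the key tool is the vector-valued Fefferman--Stein maximal inequality for the Hardy--Littlewood operator on $L^p(w)$, valid since $w\in\mathcal A_\infty\subset\mathcal A_r$ for some finite $r$ and $p>0$ can be reduced to the range where $M$ is bounded; applying the $L^p(\Rn,w)$ norm and this inequality yields $\|Tf\|_{RF^s_{p,q}(w)}\lesssim\|\lambda\|_{f^s_{p,q}(w)}$.

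For the boundedness of $S$, I would estimate the wavelet coefficient $\langle f,\varphi_{\mu,k}\rangle$ by the Peetre maximal function: since $\varphi_{\mu,k}$ is concentrated on $A_{\mu,k}$ with frequency support in a band at scale $2^\mu$, pairing against $f$ gives
\[
|\langle f,\varphi_{\mu,k}\rangle|\lesssim |A_{\mu,k}|^{1/2}\,\inf_{x\in A_{\mu,k}}\varphi^*_{\mu}f(x)
\]
for $\lambda$ large, so that $2^{\mu s}|\langle f,\varphi_{\mu,k}\rangle|\chi_{\mu,k}(x)\lesssim 2^{\mu s}\varphi^*_\mu f(x)$ uniformly on $A_{\mu,k}$. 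Summing over $k$ (the annuli at a fixed $\mu$ tile $\Rn$) and taking the $f^s_{p,q}(w)$-norm then reduces exactly to the left-hand side of the Peetre maximal inequality \eqref{ContMaxPeetre}, giving $\|S f\|_{f^s_{p,q}(w)}\lesssim\|f\|_{RF^s_{p,q}(w)}$ with $\lambda>\max\{nr_0/p,n/q\}$. The identity $T\circ S=\mathrm{Id}$ is then just the Epperson--Frazier reproducing formula, which holds a priori for radial Schwartz distributions and extends to all of $RF^s_{p,q}(w)$ by density and the boundedness just proved. I expect the main obstacle to be the first step: proving the molecular/tail estimates for $\psi_{\mu,k}$ and $\varphi_{\mu,k}$ with the correct scaling, since these are not translation-dilates of a single profile but spherical convolutions, so one must carefully control $\widehat{d\sigma_t}$ and its interaction with the Littlewood--Paley pieces across the full range of radii, and verify that the geometry of the annuli $A_{\mu,k}$ (whose widths $2^{-\mu}(j_{\nu,k}-j_{\nu,k-1})$ are comparable thanks to the near-uniform spacing of Bessel zeros) makes the resulting sums genuinely of convolution type so the maximal inequality applies.
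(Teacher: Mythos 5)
Your proposal follows essentially the same route as the paper's proof: the analysis operator $S$ is controlled by dominating the coefficients pointwise by the Peetre maximal function and invoking the weighted Peetre maximal inequality (Theorem \ref{peetre-maximal}), while the synthesis operator $T$ is handled via frequency localization, domination by $M\big(\sum_k(2^{\mu s}|\lambda_{\mu,k}|\chi_{\mu,k})^\eta\big)^{1/\eta}$ with $\eta$ small enough that $w\in\A_{p/\eta}$, and the weighted vector-valued maximal inequality of Andersen--John/Kokilashvili, with $T\circ S=\mathrm{Id}$ coming from the Epperson--Frazier expansion. The only difference is one of emphasis: the molecular/tail estimates for $\psi_{\mu,k}$ and $\varphi_{\mu,k}$ that you propose to re-derive are simply quoted from \cite{EF} in the paper, since they are unaffected by the weight.
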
 

\begin{remark}
The same type of result holds for homogeneous spaces with the usual modification, namely, by summing over $\mu\in\Z$ and suppressing $\Phi$ and $\Psi$. 
\end{remark}

\begin{proof} 
The case $w\equiv 1$ corresponds to  \cite[Theorem 2.1 and 2.2]{EF}. Since the proof in the general case is a  modification of those results, we  sketch it indicating where changes are needed. These mainly concern the continuity of the Peetre maximal function and of the Hardy-Littlewood maximal function for sequences of functions. 

Concerning the continuity of $S$ we have as in the proof of  \cite[Theorem 2.1]{EF} that for any $\mu\ge 0$  and $ \lambda>0$, 
$$ \sum_{k\ge 1} (2^{\mu s}| \langle f, \varphi_{\mu,k} \rangle| \chi_{\mu,k}(x))^q 
   \le C_\lambda 2^{\mu sq} |\varphi_\mu^*f(x)| ^q\, \text{ a.e. } $$ 
where $\varphi_\mu^*$ is the Peetre maximal function as defined in (\ref{DefPeetre}) for  $\lambda>0$.  
According to Theorem \ref{peetre-maximal} we  obtain, taking $\lambda $ big enough, that 
$$  \|S(f)\|_{ f_{p,q}^{s}(w) }
    \le C \Big\| \Big( \sum_{\mu \ge 0} 2^{\mu sq} |\varphi_\mu^*f(x)| ^q \Big)^\frac{1}{q}\Big\|_{L_p{(\Rn,w)}}  \\
    \le  C \| f \|_{ RF_{p,q}^{s}(w)}. 
$$

Concerning the continuity of $T$, fix $\lambda\in  f_{p,q}^{s}(w) $ and let 
$f=\sum_{\mu\ge 0}\sum_{k\ge 1} \lambda_{\mu,k}\psi_{\mu,k}$. 
Then for any $\eta\in (0,1]$ such that $p/\eta,q/\eta>1$ we have as in \cite{EF} that 
\begin{align*}
\|f\|_{ RF_{p,q}^{s}(w) } &  = \Big\|\Big ( \sum_{\mu\ge 0} (2^{\mu s} |\varphi_\mu * f|)^q \Big)^\frac{1}{q} \Big\|_{L^p(\Rn, w)}\\
 &\le C 
\Big \|  \Big ( \sum_{\mu\ge 0} \Big ( M \Big( \sum_{k\ge 1} (2^{\mu s} |\lambda_{\mu,k}| \chi_{\mu,k} )^\eta \Big)\Big)^\frac{q}{\eta} \Big)^\frac{\eta}{q} \Big\|_{L^\frac{p}{\eta}(\Rn,w)}^\frac{1}{\eta}, 
\end{align*}
where $M$ is the Hardy-Littlewood maximal function. According to \cite[Theorem 3.1]{AR} or \cite[Theorem 1]{Ko}, the vector-valued maximal function between weighted spaces 
$$ M:(f_\mu)_\mu\in L^{\alpha}(\ell_\beta,w) \to (Mf_\mu)_\mu\in  L^{\alpha}(\ell_\beta,w) $$ 
is continuous when the weight $w$ belongs to the $\A_\alpha$ class with $1<\alpha,\beta<\infty$. Here $L^\alpha(\ell^\beta)$ denotes the space of sequences of locally integrable functions $(f_\mu)_\mu$ such that
$$ \| (f_\mu)_\mu\|_{L^\alpha(\ell_\beta,w)}^\alpha:= 
   \int_{\R^n} \Big(\sum_\mu |f_\mu|^\beta\Big)^\frac{\alpha}{\beta}\,w\,dx <\infty. $$  
Since $w\in \mathcal{A}_p$, taking $\eta$ small enough to have $p/\eta>r_0:=\inf\,\{r : w\in\A_r\}$ we get that $w\in\A_\frac{p}{\eta}$. 
It follows that $M: L^{p/\eta}(\ell_{q/\eta},w) \to L^{p/\eta}(\ell_{q/\eta},w) $ is continuous. 
We thus obtain 
$$ \|f\|_{ RF_{p,q}^s(w) } \le 
C \Big\| \Big( \sum_{\mu\ge 0} \Big( \sum_{k\ge 1} (2^{\mu s}|\lambda_{\mu,k}|\chi_{\mu,k})^\eta\Big)^\frac{q}{\eta} \Big)^\frac{\eta}{q} \Big\|_{L_\frac{p}{\eta}(\Rn, w)}^\frac{1}{\eta}.  $$ 
Since for given $\mu$ the annuli $A_{\mu,k}$, $k\ge 1$, are essentially disjoint we obtain 
$$ 
\|f\|_{RF_{p,q}^{s}(w) } \le 
C \Big\| \Big( \sum_{\mu\ge 0}  \sum_{k\ge 1} (2^{\mu s}|\lambda_{\mu,k}|\chi_{\mu,k})^q  \Big)^\frac{1}{q} \Big\|_{L_p(\Rn,w)}
= C \|\lambda\|_{ f_{p,q}^{s}(w)}. 
$$
\end{proof} 

The analogous statement  for weighted Besov spaces reads as follows: 

\begin{theorem} 
Let $p,q\in [1, \infty]$ and $w\in \mathcal{A}_\infty$. Then, the operators 
$$ S:f\in  RB_{p,q}^{s}(\Rn, w) \to (\langle f, \varphi_{\mu,k} \rangle)_{\mu,k}\in  b_{p,q}^{s}(w) $$ 
and 
$$ T:\lambda\in  b_{p,q}^{s}(w)  \to \sum_{\mu\ge 0}\sum_{k\ge 1} \lambda_{\mu,k}\psi_{\mu,k} \in  RB_{p,q}^{s}(\Rn, w) $$ 
are bounded, and the composition  $T \circ S$ is the identity on $RB_{p,q}^{s}(\Rn, w)$. In particular, $\|f\|_{RB_{p,q}^s(w)} \simeq \|S(f)\|_{b_{p,q}^s(w)}$. The same result holds also for the homogeneous version of these spaces. 
\label{wavelet-characterization-theorem}
\end{theorem}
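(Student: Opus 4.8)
The plan is to follow the same two-part structure as the Triebel-Lizorkin proof in Theorem \ref{wavelet-TL}, replacing the Triebel-Lizorkin norms by Besov norms and exchanging the roles of the $L^p$ and $\ell^q$ sums accordingly. Concretely, I would establish the boundedness of $S$, the boundedness of $T$, and finally observe that $T\circ S=\mathrm{Id}$ (the latter being a pointwise reconstruction identity inherited verbatim from \cite[Theorems 2.1, 2.2]{EF}, since the reproducing formula $f=\sum_{\mu,k}\langle f,\varphi_{\mu,k}\rangle\psi_{\mu,k}$ is weight-independent).

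For the continuity of $S$, I would start from the same pointwise estimate already used above, namely that for each fixed $\mu\ge 0$ and suitable $\lambda>0$,
\[
\sum_{k\ge 1}\bigl(2^{\mu s}|\langle f,\varphi_{\mu,k}\rangle|\chi_{\mu,k}(x)\bigr)^q\le C_\lambda\,2^{\mu s q}\,|\varphi_\mu^* f(x)|^q\quad\text{a.e.}
\]
Taking the $L^p(\Rn,w)$ norm of the $(1/q)$-th power of the right-hand side and then the $\ell^q$ sum over $\mu$, I would apply part (ii) of Theorem \ref{peetre-maximal} (the Besov version of the Peetre maximal inequality), choosing $\lambda>nr_0/p$, to conclude
\[
\|S(f)\|_{b_{p,q}^s(w)}\le C\Bigl(\sum_{\mu\ge 0}\bigl[2^{\mu s}\|\varphi_\mu^* f\|_{L^p(\Rn,w)}\bigr]^q\Bigr)^{1/q}\le C\|f\|_{RB_{p,q}^s(w)}.
\]
Note this is actually cleaner than the Triebel-Lizorkin case, because the Besov norm already has the $L^p$ norm inside the $\ell^q$ sum, which matches the structure of the Peetre estimate directly.

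For the continuity of $T$, the argument again parallels the $F$-case but with the sums in the opposite order. Given $\lambda\in b_{p,q}^s(w)$ and $f=\sum_{\mu,k}\lambda_{\mu,k}\psi_{\mu,k}$, I would use the Epperson-Frazier pointwise bound to dominate $2^{\mu s}|\varphi_\mu*f|$ by a Hardy-Littlewood maximal expression in the coefficients, for a parameter $\eta\in(0,1]$ chosen so that $p/\eta>r_0$, hence $w\in\A_{p/\eta}$. The key difference is that now I only need the \emph{scalar} weighted maximal inequality $M:L^{p/\eta}(\Rn,w)\to L^{p/\eta}(\Rn,w)$ applied separately inside each term of the $\ell^q$ sum over $\mu$, rather than the vector-valued inequality of \cite{AR, Ko}. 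After undoing the $\eta$-power and using that the annuli $A_{\mu,k}$ are essentially disjoint for fixed $\mu$ (so that $\sum_k(\cdots\chi_{\mu,k})^\eta$ collapses back to the $k$-sum of $q$-powers), I would arrive at $\|f\|_{RB_{p,q}^s(w)}\le C\|\lambda\|_{b_{p,q}^s(w)}$. The main point to verify carefully is that exchanging the order of the maximal function and the $\ell^q$-summation over $\mu$ is legitimate; since the $\mu$-sum sits \emph{outside} the $L^{p/\eta}$ norm in the Besov case, one can apply the scalar maximal bound termwise, so the vector-valued machinery is unnecessary and the essential-disjointness of the annuli handles the $k$-sum exactly as in the $F$-case. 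For the homogeneous spaces, the identical argument works after replacing the $\mu\ge 0$ sums by sums over $\Z$ and dropping the low-frequency pieces $\Phi,\Psi$, as noted in the preceding remark.
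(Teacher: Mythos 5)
Your proposal is correct and takes essentially the same route as the paper's own proof: boundedness of $S$ via the pointwise coefficient estimate plus the Besov case of the Peetre maximal inequality (Theorem \ref{peetre-maximal}(ii)), and boundedness of $T$ via the Epperson--Frazier bound, the \emph{scalar} weighted maximal inequality $M:L^{p/\eta}(\Rn,w)\to L^{p/\eta}(\Rn,w)$ applied termwise in the $\ell^q$ sum over $\mu$, and the essential disjointness of the annuli. In particular, your observation that the vector-valued maximal theorem of \cite{AR,Ko} is unnecessary here (unlike in Theorem \ref{wavelet-TL}) is exactly the simplification the paper exploits.
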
 

\begin{proof}
The unweighted case $w=1$ corresponds to \cite[Theorems 5.1 and 5.2]{EF}.

For the continuity of $S$, as in the proof of the  previous theorem,  we obtain that
$$ \sum_{k\ge 1} 2^{\mu s}| (\langle f, \varphi_{\mu,k} \rangle)| \chi_{\mu,k}(x)
   \le C2^{\mu s} |\varphi_\mu^*f(x)| \, \text{ a.e. } $$ 
where $\varphi_\mu^*$ is the Peetre maximal function  for a given $\lambda>0$. Taking $\lambda$ big enough and using  Theorem \ref{peetre-maximal} we have
$$
\|S(f)\|_{b_{p,q}^s(w)} \le C \Big( \sum_{\mu \ge 0} 2^{\mu s q} \|\varphi_\mu^* f\|_{L^p(\Rn, w)}^q \Big)^\frac{1}{q} \le C \|f\|_{RB_{p,q}^s(w)}.
$$

For the continuity of $T$,  fix $\lambda\in  b_{p,q}^{s}(w) $ and let 
$f=\sum_{\mu\ge 0}\sum_{k\ge 1} \lambda_{\mu,k}\psi_{\mu,k}$. Then, arguing similarly as in the Triebel-Lizorkin case we see that for any $\mu\ge 0$, 
\begin{equation*}
\begin{split} 
\|\varphi_\mu * f\|_{L^p(\Rn, w)} 
& \le C \sum_{\nu= \mu-1}^{\mu +1} \Big\| \Big ( M \Big ( \sum_{k\ge 1} |\lambda_{\nu, k}|^\eta \chi_{\nu, k}^\eta \Big )  \Big )^\frac{1}{\eta}\Big \|_{L^p(\Rn, w)}  \\
& = C \sum_{\nu= \mu-1}^{\mu +1} \Big\| M \Big ( \sum_{k\ge 1} |\lambda_{\nu, k}|^\eta \chi_{\nu, k}^\eta \Big )  \Big \|_{L^{p/\eta}(\Rn, w)}^{1/\eta}.
\end{split}
\end{equation*} 
Since $w\in \mathcal{A}_\infty$, setting  as before $r_0:=\inf \{r: w\in \mathcal{A}_r\}$ and taking $\eta$ small enough to have $r_0<p/\eta$ we get that $w\in\A_\frac{p}{\eta}$ so that 
the  maximal operator $M:L^{p/\eta}(\Rn,w)\to L^{p/\eta}(\Rn,w)$ is continous. Then 
\begin{equation*}
\begin{split} 
  \Big\| M \Big ( \sum_{k\ge 1} |\lambda_{\nu, k}|^\eta \chi_{\nu, k}^\eta \Big )  \Big \|_{L^{p/\eta}(\Rn, w)}^{1/\eta} 
  &\le C \Big\|  \sum_{k\ge 1} |\lambda_{\nu, k}|^\eta \chi_{\nu, k}^\eta   \Big \|_{L^{p/\eta}(\Rn, w)}^{1/\eta}  \\
  & = C \Big\|  \sum_{k\ge 1} |\lambda_{\nu, k}| \chi_{\nu, k}   \Big \|_{L^p(\Rn, w)},
\end{split}
\end{equation*} 
where we have used the fact that for  given $\nu$, the annuli $A_{\nu,k}$ are essentially disjoint. 
We deduce that  
\begin{align*}
\|f\|^q_{ RB_{p,q}^{s}(w) } 
&  =   \sum_{\mu \ge 0} 2^{\mu s q} \|\varphi_\mu * f\|_{L^p(\Rn, w)}^q  \\
& \le C\sum_{\mu\ge 0} 2^{\mu sq} \sum_{\nu=\mu-1}^{\mu+1} 
     \Big\| \sum_{k\ge 1} |\lambda_{\nu,k}|\chi_{\nu,k}\Big\|^q_{L^p(\Rn,w)} \\
& \le C\sum_{\mu\ge 0} 2^{\mu sq}  \Big\| \sum_{k\ge 1} |\lambda_{\nu,k}|\chi_{\nu,k}\Big\|^q_{L^p(\Rn,w)} \\     
& = C\|\lambda\|_{b_{p,q}^s(w)}^q
\end{align*}
\end{proof}

\section{Continous and compact embeddings of weighted radial Besov spaces}

In this section we use Theorem \ref{wavelet-characterization-theorem}
 to obtain sufficient  conditions for the continuity and compactness of the embeddings of weighted radial Besov spaces, and apply these results to some relevant examples.

\begin{theorem}
\label{main-theorem} Let $p_1, p_2, q_1, q_2\in[1, \infty]$ and $w_1,w_2$ be $\mathcal{A}_\infty$-weights.   
There is a continuous embedding $RB_{p_1,q_1}^{s_1}(\Rn, w_1)\to RB_{p_2,q_2}^{s_2}(\Rn, w_2)$ provided that
\begin{equation}\label{CNS-cont-sequ-space}
\left\{2^{-\mu(s_1-s_2)} \left\|\left\{\frac{w^2_{\mu k}}{w^1_{\mu k}}\right\}_k\right\|_{\ell_{p*}}\right\}_\mu \in \ell_{q*}
\end{equation}

where 
$$ w^1_{\mu k} = \|\tilde\chi_{A_{\mu k}}\|_{L^{p_1}(\Rn, w_1)},\quad 
  w^2_{\mu k} = \|\tilde\chi_{A_{\mu k}}\|_{L^{p_2}(\Rn, w_2)}, $$ 
and 
$$ \frac{1}{p^*}:=\left(\frac{1}{p_2}-\frac{1}{p_1}\right)_+,\quad 
   \frac{1}{q^*}:=\left(\frac{1}{q_2}-\frac{1}{q_1}\right)_+. $$ 
The embedding is compact provided that \eqref{CNS-cont-sequ-space} holds and
moreover 
\begin{equation*}
\begin{split} 
 & \lim_{\mu\to+\infty} 2^{\mu(s_2-s_1)}  \left\|\left\{\frac{w^2_{\mu k}}{w^1_{\mu k}}\right\}_k\right\|_{\ell_{p*}}= 0
    \quad \text{ if } q^*=\infty \\
 & \lim_{|k|\to +\infty} \frac{w^1_{\mu k}}{w^2_{\mu k}} = \infty \quad \text{ for all $\mu\ge 0 $ if } p^*=\infty.  
\end{split} 
\end{equation*} 
\end{theorem}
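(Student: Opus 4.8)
The plan is to transfer everything to the sequence spaces $b_{p,q}^s(w)$ by means of the wavelet isomorphism of Theorem \ref{wavelet-characterization-theorem}. Since $S$ is bounded from $RB_{p_1,q_1}^{s_1}(\Rn,w_1)$ into $b_{p_1,q_1}^{s_1}(w_1)$, since $T$ is bounded from $b_{p_2,q_2}^{s_2}(w_2)$ into $RB_{p_2,q_2}^{s_2}(\Rn,w_2)$, and since $T\circ S$ reproduces any radial distribution, the sought embedding factors as
\[ RB_{p_1,q_1}^{s_1}(\Rn,w_1)\xrightarrow{\,S\,} b_{p_1,q_1}^{s_1}(w_1)\xrightarrow{\,\iota\,} b_{p_2,q_2}^{s_2}(w_2)\xrightarrow{\,T\,} RB_{p_2,q_2}^{s_2}(\Rn,w_2), \]
where $\iota$ denotes the inclusion. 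Because $S,T$ are bounded, it suffices to prove that $\iota$ is bounded (resp. compact) under the stated hypotheses, and the corresponding property passes to the composition.

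For the reduction to a multiplier problem, I would first use that for fixed $\mu$ the annuli $A_{\mu,k}$ are essentially disjoint, so that
\[ \|\lambda\|_{b_{p,q}^s(w)}=\Big(\sum_{\mu\ge 0}2^{\mu s q}\Big(\sum_{k\ge 1}|\lambda_{\mu,k}|^{p}(w_{\mu k})^{p}\Big)^{q/p}\Big)^{1/q},\qquad w_{\mu k}=\|\tilde\chi_{A_{\mu,k}}\|_{L^p(\Rn,w)}. \]
The rescaling $\tilde\lambda_{\mu k}:=\lambda_{\mu,k}\,w^1_{\mu k}$ then diagonalizes $\iota$ into pointwise multiplication by $v_{\mu k}:=w^2_{\mu k}/w^1_{\mu k}$, acting between the mixed-norm spaces with weights $2^{\mu s_1}$, inner exponent $p_1$, outer exponent $q_1$, and its analogue with $2^{\mu s_2},p_2,q_2$. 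I would estimate this in two steps. For each fixed $\mu$, the sharp bound for multiplication as a map $\ell_{p_1}\to\ell_{p_2}$ (Hölder's inequality when $p_2<p_1$, trivial when $p_2\ge p_1$) gives
\[ \Big(\sum_k|v_{\mu k}\tilde\lambda_{\mu k}|^{p_2}\Big)^{1/p_2}\le\Big\|\{v_{\mu k}\}_k\Big\|_{\ell_{p^*}}\Big(\sum_k|\tilde\lambda_{\mu k}|^{p_1}\Big)^{1/p_1}. \]
Applying the same principle once more in $\mu$, with multiplier $a_\mu:=2^{\mu(s_2-s_1)}\|\{v_{\mu k}\}_k\|_{\ell_{p^*}}$ acting $\ell_{q_1}\to\ell_{q_2}$, yields $\|\iota\|\le\|\{a_\mu\}_\mu\|_{\ell_{q^*}}$, which is finite precisely under \eqref{CNS-cont-sequ-space}. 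This settles continuity.

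For compactness it is enough to show $\iota$ is compact, which I would do by approximating it in operator norm by the finite-rank truncations $\iota_{N,K}$ retaining only the coefficients with $\mu\le N$ and $k\le K$. Splitting $\iota-\iota_{N,K}$ into the part supported on $\{\mu>N\}$ and the part supported on $\{\mu\le N,\;k>K\}$, the two-step estimate bounds their norms by $\|\{a_\mu\}_{\mu>N}\|_{\ell_{q^*}}$ and by $\|\{a^{(K)}_\mu\}_{\mu\le N}\|_{\ell_{q^*}}$ with $a^{(K)}_\mu:=2^{\mu(s_2-s_1)}\|\{v_{\mu k}\}_{k>K}\|_{\ell_{p^*}}$. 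The first quantity tends to $0$ as $N\to\infty$: automatically from \eqref{CNS-cont-sequ-space} when $q^*<\infty$, and exactly by the first additional hypothesis (forcing $a_\mu\to0$) when $q^*=\infty$. With $N$ then fixed, the second is a maximum over finitely many $\mu$, and for each such $\mu$ the tail $\|\{v_{\mu k}\}_{k>K}\|_{\ell_{p^*}}$ tends to $0$ as $K\to\infty$: automatically when $p^*<\infty$, and by the second additional hypothesis $v_{\mu k}\to0$ (i.e. $w^1_{\mu k}/w^2_{\mu k}\to\infty$) when $p^*=\infty$. Hence $\|\iota-\iota_{N,K}\|\to0$ for a suitable $K=K(N)\to\infty$, so $\iota$, and thus $T\circ\iota\circ S$, is compact.

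The continuity half is essentially the iterated $\ell^p$-multiplier computation combined with the wavelet isomorphism, the only delicate point being the diagonalizing rescaling $\tilde\lambda_{\mu k}=\lambda_{\mu,k}w^1_{\mu k}$. I expect the genuine obstacle to be the compactness argument, where one must control truncations in the two indices $\mu$ and $k$ simultaneously and check that the extra hypotheses are exactly what is needed in the boundary cases $q^*=\infty$ and $p^*=\infty$, in which mere summability of $\{a_\mu\}$ (resp. boundedness of $\{v_{\mu k}\}$) does not by itself make the relevant tails vanish; choosing $N$ first from the outer condition and then $K$ from the inner condition for the finitely many remaining $\mu$ is the step most prone to error.
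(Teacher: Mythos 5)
Your proposal is correct, and its skeleton coincides with the paper's: both reduce the problem, via the radial wavelet frame of Theorem \ref{wavelet-characterization-theorem} and the factorization through $S$ and $T$, to the embedding of the sequence spaces $b^{s_1}_{p_1,q_1}(w_1)\hookrightarrow b^{s_2}_{p_2,q_2}(w_2)$, and both perform the same diagonalizing rescaling (the paper normalizes by $w^2_{\mu k}2^{\mu s_2}$, you by $w^1_{\mu k}$; this is immaterial). The genuine difference is in how that sequence-space embedding is handled. The paper stops there and cites the criterion of K\"uhn--Leopold--Sickel--Skrzypczak \cite[Theorem 3.1]{KLSS} for continuity and compactness of $\ell_{q_1}(\beta_\mu \ell_{p_1}(w))\to \ell_{q_2}(\ell_{p_2})$, whereas you reprove the sufficiency half of that criterion from scratch: H\"older's inequality for the norm of a diagonal multiplier $\ell_{p_1}\to\ell_{p_2}$, applied once in $k$ and once in $\mu$, gives continuity with $\|\iota\|\le\|\{a_\mu\}\|_{\ell_{q^*}}$; and norm-approximation by finite-rank truncations, choosing $N$ from the outer ($q^*$) condition and then $K=K(N)$ from the inner ($p^*$) condition for the finitely many remaining $\mu$, gives compactness. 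Your handling of the boundary cases $p^*=\infty$ and $q^*=\infty$ matches exactly the extra hypotheses in the statement, so there is no gap. What the citation buys the paper is brevity and, at the sequence level, an ``if and only if'' (plus the entropy-number estimates of \cite{KLSS}); note, however, that necessity does not transfer back to the function spaces in any case, because the radial wavelet system is a frame rather than a basis --- a point the paper itself makes in the introduction --- so your self-contained argument proves the theorem as stated in full while making the proof independent of \cite{KLSS}.
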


\begin{proof} 
By Theorem \ref{wavelet-characterization-theorem} it suffices to study the embedding of the corresponding sequence spaces 
$$  b_{p_1,q_1}^{s_1}(w_1)\to b_{p_2,q_2}^{s_2}(w_2) $$
that is, using the notation of \cite[section 3]{KLSS}, 
$$ \ell_{q_1}(2^{\mu s_1}\ell_{p_1}(w_1)) \to \ell_{q_2}(2^{\mu s_2}\ell_{p_2}(w_2)). $$ 
Notice that the continuity of this  embedding is equivalent to the continuity of the embedding 
$$ \ell_{q_1}(2^{\mu(s_1-s_2)}\ell_{p_1}(\frac{w_1}{w_2})) \to \ell_{q_2}(\ell_{p_2}). $$ 
Indeed 
$$ \|\lambda\|_{\ell_{q_2}(2^{\mu s_2}\ell_{p_2}(w_2))}
 = \|\tilde \lambda\|_{\ell_{q_2}(\ell_{p_2})},\quad \text{ with } \quad 
  \tilde \lambda_{\mu k}= \lambda_{\mu k}w^2_{\mu k}2^{\mu s_2}. $$ 
We can rewrite this embedding using the notation of \cite{KLSS} as 
$$ \ell_{q_1}(\beta_\mu \ell_{p_1}(w)) \to \ell_{q_2}(\ell_{p_2})
\quad \text{ with }\quad  
\beta_\mu=2^{\mu(s_1-s_2)},\, w=(w_{\mu k})_{\mu k},\, w_{\mu k}=\frac{w^1_{\mu k}}{w^2_{\mu k}}.   $$ 
According to \cite[Theorem 3.1]{KLSS}, this embedding is continuous if and only if 
$$ (\beta_\mu^{-1}\|(w_{\mu k}^{-1})_k\|_{\ell_{p^*}})_\mu \in \ell_{q_*} $$ 
which proves that $RB^{s_1}_{p_1,q_1}(w_1)\subseteq RB^{s_2}_{p_2,q_2}(w_2)$ if (\ref{CNS-cont-sequ-space}) holds. 

This embedding is compact if moreover 
\begin{equation*}
\begin{split} 
 & \lim_{\mu \to+\infty} \beta_\mu^{-1}\|(w_{\mu k}^{-1})_k\|_{\ell_{p^*}} = 0 \quad \text{ if } q^*=\infty \\
 & \lim_{|k|\to +\infty} w_{\mu k} = \infty \quad \text{ for all $\mu \ge 0 $ if } p^*=\infty.  
\end{split} 
\end{equation*} 
which proves the theorem.
\end{proof}

As an example of application we now consider the case $w_1(x)=|x|^{\gamma_1}$, $w_2(x)=|x|^{\gamma_2}$ with $\gamma_1, \gamma_2 > -n$ so that $w_1,w_2$ are $\mathcal{A}_\infty$-weights. 
In order to simplify the statement of the following examples we introduce
\begin{equation} \label{def-delta} \delta:=s_1 - \frac{n}{p_1}-s_2 + \frac{n}{p_2}. \end{equation}

\begin{ex}
\label{example-besov}
Let $p_1, p_2, q_1, q_2 \in [1, \infty]$ and $\gamma_1, \gamma_2 >-n$. There is a continuous embedding $RB_{p_1,q_1}^{s_1}(\Rn,|x|^{\gamma_1})\to RB_{p_2,q_2}^{s_2}(\Rn,|x|^{\gamma_2})$ provided that
$$
   \begin{cases} \frac{\gamma_1}{p_1}-\frac{\gamma_2}{p_2}  \ge (n-1)\left(\frac{1}{p_2}-\frac{1}{p_1}\right) &\text{ if } p^*=\infty \\
                \frac{\gamma_1}{p_1}-\frac{\gamma_2}{p_2} > \frac{n}{p*}  &\text{ if } p^*<\infty 
   \end{cases} 
   \quad\text{ and } \quad 
 \begin{cases} \delta \ge \frac{\gamma_1}{p_1}-\frac{\gamma_2}{p_2} &\text{ if } q^*=\infty \\
                 \delta > \frac{\gamma_1}{p_1}-\frac{\gamma_2}{p_2} &\text{ if } q^*<\infty 
   \end{cases}
$$ 
where $\delta$ is as in \eqref{def-delta}.
This embedding is compact provided that the previous conditions hold and  moreover
$$ \frac{\gamma_1}{p_1}-\frac{\gamma_2}{p_2}>(n-1)\left(\frac{1}{p_2}-\frac{1}{p_1}\right)\quad\text{ if } p^*=\infty  \quad\text{ and } \quad  
\delta> \frac{\gamma_1}{p_1}-\frac{\gamma_2}{p_2} \quad\text{ if } q^*=\infty. $$
\end{ex}

\begin{proof} 
Since $|x|\sim k2^{-\mu}$ for $x\in A_{\mu k}$, we have for $i=1,2$ that 
$$
w_{\mu k}^i = \|\tilde\chi_{A_{\mu k}}\|_{L^{p_i}(|x|^{\gamma_i})}
              \sim |A_{\mu k}|^{-1/2}((k2^{-\mu})^{\gamma_i}|A_{\mu k}|)^\frac{1}{p_i}. 
$$
Moreover $|A_{\mu k}|\sim k^{n-1}2^{-\mu n}$. Hence 
$$ 
\frac{w_{\mu k}^2}{w_{\mu k}^1} \sim 2^{\mu (\frac{n+\gamma_1}{p_1}-\frac{n+\gamma_2}{p_2})}k^{\frac{\gamma_2}{p_2}-\frac{\gamma_1}{p_1}+(n-1)(\frac{1}{p_2}-\frac{1}{p_1})}. 
$$

Then if e.g. $p^*,q^*<\infty$ then (\ref{CNS-cont-sequ-space}) writes 
$$
\sum_k k^{p^*(\frac{\gamma_2}{p_2}-\frac{\gamma_1}{p_1}+(n-1)(\frac{1}{p_2}-\frac{1}{p_1}))} <\infty  
  \quad \text{ and } \quad 
  \sum_\mu 2^{\mu q^*(\frac{\gamma_1}{p_1}-\frac{\gamma_2}{p_2}-\delta)} <\infty 
 $$ 
i.e. 
$$ p^*\left(\frac{\gamma_2}{p_2}-\frac{\gamma_1}{p_1}+(n-1)\left(\frac{1}{p_2}-\frac{1}{p_1}\right)\right)<-1 \quad \text{ and } \quad  q^*\left(\frac{\gamma_1}{p_1}-\frac{\gamma_2}{p_2}-\delta\right)<0. $$  
Recalling the definition of $p^*,q^*$ this gives the statement. 

Concerning the compactness we have 
$$  
2^{\mu(s_2-s_1)} \left\{\sum_k \left(\frac{w_{\mu k}^2}{w_{\mu k}^1}\right)^{p^*}\right\}^\frac{1}{p^*}
 \sim  2^{\mu(\frac{\gamma_1}{p_1}-\frac{\gamma_2}{p_2}-\delta)} \left\{\sum_k k^{p^*\left(\frac{\gamma_2}{p_2}-\frac{\gamma_1}{p_1}+(n-1)(\frac{1}{p_2}-\frac{1}{p_1})\right)}   \right\}^\frac{1}{p^*} 
$$
where the sum in the right hand side is finite. 

\end{proof}

\begin{remark}
\begin{enumerate}
\item It is immediate form the above example that one has an improvement with respect to the non-radial case, c.f. \cite[Proposition 2.8]{HS1}. Indeed, in the case $p^*=\infty$ (that is, $p_1\le p_2$) we can have $\frac{\gamma_1}{p_1}-\frac{\gamma_2}{p_2} <0$, in which case $\delta$ can be negative as well, while in the non-radial case both values must be non-negative.
\item An  alternative proof of the continuity part of the above example  can be found in \cite[Theorem 12]{DD4}. For the corresponding non-radial case see \cite[Theorem 1.1]{MV}.
\end{enumerate}
\end{remark}

Our next examples concern weights of purely polynomial growth. To this end, let $w_{\alpha,\beta}=\begin{cases}
|x|^\alpha  &\mbox{if } |x|\le 1\\
|x|^\beta &\mbox{if } |x|> 1\\
\end{cases}$ with $\alpha, \beta >-n.$

\begin{ex}
Let $-\infty < s_2 \le s_1 < \infty$, $0< p_1 <\infty$, $0<p_2\le \infty$ and $0<q_1,q_2\le \infty$. Then, there is a continuous embedding $RB_{p_1,q_1}^{s_1}(\Rn,w_{\alpha, \beta})\to RB_{p_2,q_2}^{s_2}(\Rn)$ provided
\begin{equation*}\label{CNS10}
\begin{cases}
 \frac{\beta}{p_1}\ge (n-1)(\frac{1}{p_2}-\frac{1}{p_1}) & \mbox{ if } p^*=\infty\\
 \frac{\beta}{p_1}>\frac{n}{p^*} &\mbox{ if } p^*<\infty
\end{cases}
\end{equation*}
and one of the following conditions is satisfied:
$$
\begin{cases}
\delta \ge \max (\frac{\alpha}{p_1} , (n-1)(\frac{1}{p_2}-\frac{1}{p_1})) &\mbox{ if } q^*=\infty, p^*=\infty \\
\delta > \max( \frac{\alpha}{p_1} , (n-1)(\frac{1}{p_2}-\frac{1}{p_1})) &\mbox{ if }  q^*<\infty, p^*=\infty\\
\delta \ge \max(\frac{\alpha}{p_1}, \frac{n}{p^*}) &\mbox{ if }  q^*=\infty, p^*<\infty, \frac{n}{p*}\neq \frac{\alpha}{p_1}\\
\delta > \max(\frac{\alpha}{p_1}, \frac{n}{p^*}) &\mbox{ otherwise} 
\end{cases}
$$
Moreover the embedding $RB_{p_1,q_1}^{s_1}(\Rn,w_{\alpha, \beta})\to RB_{p_2,q_2}^{s_2}(\Rn)$ is compact provided that
$$
\begin{cases}
 \frac{\beta}{p_1}> (n-1)(\frac{1}{p_2}-\frac{1}{p_1}) & \mbox{ if } p^*=\infty\\
 \frac{\beta}{p_1}>\frac{n}{p^*} &\mbox{ if } p^*<\infty
\end{cases}
$$
and
$$ \begin{cases}
\delta > \max( \frac{\alpha}{p_1} , (n-1)(\frac{1}{p_2}-\frac{1}{p_1})) &\mbox{ if } p^*=\infty\\
\delta > \max(\frac{\alpha}{p_1}, \frac{n}{p^*}) &\mbox{ if }  p^*<\infty \\
\end{cases}
$$
\end{ex}

\begin{proof}

Consider first the Besov case. We have 
$$
\frac{w_{\mu k}^2}{w_{\mu k}^1} \sim k^{(n-1)(\frac{1}{p_2}-\frac{1}{p_1})} \, 2^{-\mu n (\frac{1}{p_2}-\frac{1}{p_1})} \times \begin{cases}
k^{-\frac{\alpha}{p_1}} \, 2^{ \frac{\mu \alpha}{p_1}} & \mbox{ if }k\le 2^\mu \\
k^{-\frac{\beta}{p_1}} \, 2^{ \frac{\mu \beta}{p_1}} & \mbox{ if }k> 2^\mu 
\end{cases}
$$
Then if e.g. $p^*=\infty, q^*<\infty$,  \eqref{CNS-cont-sequ-space} writes
$$
\sum_\mu 2^{\mu q^*[(s_2-s_1) -n (\frac{1}{p_2}-\frac{1}{p_1})+ \frac{\alpha}{p_1}]} \left(\sup_{k\le 2^\mu} k^{(n-1)(\frac{1}{p_2}-\frac{1}{p_1})-\frac{\alpha}{p_1}}\right)^{q^*} <\infty
$$
and
$$
\sum_\mu 2^{\mu q^*[(s_2-s_1) -n (\frac{1}{p_2}-\frac{1}{p_1})+ \frac{\beta}{p_1}]} \left(\sup_{k> 2^\mu} k^{(n-1)(\frac{1}{p_2}-\frac{1}{p_1})-\frac{\beta}{p_1}}\right)^{q^*} <\infty
$$
which gives the statement. As for the compactness, we need that 
$$
\lim_{|k|\to \infty}k^{(n-1)(\frac{1}{p_2}-\frac{1}{p_1})-\frac{\beta}{p_1}} = 0.
$$
 The remaining cases are analogous. 
 
\end{proof}

The generalization to the following two-weighted embeddings is straightforward:

\begin{ex}
Let $-\infty < s_2 \le s_1 < \infty$, $0< p_1 <\infty$, $0<p_2\le \infty$ and $0<q_1,q_2\le \infty$. Then, there is a continuous embedding $RB_{p_1,q_1}^{s_1}(\Rn,w_{\alpha_1, \beta_1})\to RB_{p_2,q_2}^{s_2}(\Rn,w_{\alpha_2, \beta_2})$ provided
$$
\begin{cases}
 \frac{\beta_1}{p_1}-\frac{\beta_2}{p_2}\ge (n-1)(\frac{1}{p_2}-\frac{1}{p_1}) & \mbox{ if } p^*=\infty\\
 \frac{\beta_1}{p_1}-\frac{\beta_2}{p_2}>\frac{n}{p^*} &\mbox{ if } p^*<\infty
\end{cases}
$$
and one of the following conditions is satisfied:
$$
\begin{cases}
\delta \ge \max (\frac{\alpha_1}{p_1}-\frac{\alpha_2}{p_2} , (n-1)(\frac{1}{p_2}-\frac{1}{p_1})) &\mbox{ if } q^*=\infty, p^*=\infty \\
\delta > \max( \frac{\alpha_1}{p_1}-\frac{\alpha_2}{p_2} , (n-1)(\frac{1}{p_2}-\frac{1}{p_1})) &\mbox{ if }  q^*<\infty, p^*=\infty\\
\delta \ge \max(\frac{\alpha_1}{p_1}-\frac{\alpha_2}{p_2}, \frac{n}{p^*}) &\mbox{ if }  q^*=\infty, p^*<\infty, \frac{n}{p*}\neq \frac{\alpha_1}{p_1}-\frac{\alpha_2}{p_2}\\
\delta > \max(\frac{\alpha_1}{p_1}-\frac{\alpha_2}{p_2}, \frac{n}{p^*}) &\mbox{ otherwise} 
\end{cases}
$$
where $\delta$ is as in \eqref{def-delta}.

Moreover the embedding $RB_{p_1,q_1}^{s_1}(\Rn,w_{\alpha_1, \beta_1})\to RB_{p_2,q_2}^{s_2}(\Rn,w_{\alpha_2, \beta_2})$, $p_1, p_2 \in (0,\infty)$, is compact provided that

$$
\begin{cases}
 \frac{\beta_1}{p_1}-\frac{\beta_2}{p_2}> (n-1)(\frac{1}{p_2}-\frac{1}{p_1}) & \mbox{ if } p^*=\infty\\
 \frac{\beta_1}{p_1}-\frac{\beta_2}{p_2}>\frac{n}{p^*} &\mbox{ if } p^*<\infty
\end{cases}
$$
and one of the following conditions is satisfied:
$$
\begin{cases}
\delta > \max (\frac{\alpha_1}{p_1}-\frac{\alpha_2}{p_2} , (n-1)(\frac{1}{p_2}-\frac{1}{p_1})) &\mbox{ if } p^*=\infty \\
\delta > \max(\frac{\alpha_1}{p_1}-\frac{\alpha_2}{p_2}, \frac{n}{p^*}) &\mbox{ if }  p^*<\infty
\end{cases}
$$

\end{ex}

\section{Continuous and compact embeddings of weighted radial Triebel-Lizorkin spaces}
\label{section-TL}

Our next result concers embeddings for Triebel-Lizorkin spaces with radial $\mathcal{A}_\infty$ weights. We will follow the approach in \cite{MV2}, which is based on a Gagliardo-Nirenberg type inequality and two lemmas on products of Muckenhoupt weights that we recall for the reader's convenience.

\begin{proposition}\cite[Proposition 5.1]{MV} 
Let $q, q_0, q_1 \in [1, \infty]$ and $\theta \in (0,1)$. Let $p, p_0, p_1 \in (1, \infty)$ and $-\infty<s_0<s_1<\infty$ satisfy
$$
\frac{1}{p}=\frac{1-\theta}{p_0}+\frac{\theta}{p_1} \;  \mbox{ and } \; s = (1-\theta)s_0+ \theta s_1.
$$
Let further $w, w_0, w_1 \in \mathcal{A}_\infty$ be such that $w=w_0^{(1-\theta)p/p_0}w_1^{\theta p/p_1}.$ Then there exists a constant $C$ such that for all $f\in \mathcal{S}^\prime(\R^n)$ one has
$$
\|f\|_{F^s_{p,q}(w)} \le C \|f\|_{F^{s_0}_{p_0,q_0}(w_0)}^{1-\theta}  \|f\|_{F^{s_1}_{p_1,q_1}(w_1)}^\theta.
$$
\label{propositionMV}
\end{proposition}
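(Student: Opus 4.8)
The plan is to reduce the inequality to the endpoint case $q_0=q_1=\infty$ and then establish a pointwise Littlewood--Paley estimate that is integrated by Hölder's inequality. First I would invoke the monotonicity embedding in the fine index from Theorem \ref{elementary-embeddings}(1), which gives $\|f\|_{F^{s_0}_{p_0,\infty}(w_0)}\le C\|f\|_{F^{s_0}_{p_0,q_0}(w_0)}$ and likewise for the index $1$; since the left-hand side of the claimed inequality does not involve $q_0,q_1$, it suffices to prove it with $q_0=q_1=\infty$. Writing $M_i(x):=\sup_{\mu\ge 0}2^{\mu s_i}|S_\mu f(x)|$ for $i=0,1$, so that $\|f\|_{F^{s_i}_{p_i,\infty}(w_i)}=\|M_i\|_{L^{p_i}(\Rn,w_i)}$, the goal becomes the pointwise bound $\big(\sum_{\mu\ge 0}2^{q\mu s}|S_\mu f(x)|^q\big)^{1/q}\le C\,M_0(x)^{1-\theta}M_1(x)^\theta$ followed by an integration.

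The core of the argument is this pointwise estimate. Fixing $x$ and writing $c_\mu:=|S_\mu f(x)|$, I would use the two elementary bounds $c_\mu\le 2^{-\mu s_0}M_0(x)$ and $c_\mu\le 2^{-\mu s_1}M_1(x)$, whence $2^{\mu s}c_\mu\le\min\{2^{\mu(s-s_0)}M_0,\,2^{\mu(s-s_1)}M_1\}$. Because $s=(1-\theta)s_0+\theta s_1$ with $s_0<s_1$, the exponent $s-s_0=\theta(s_1-s_0)$ is positive while $s-s_1=-(1-\theta)(s_1-s_0)$ is negative, so the first bound increases and the second decreases in $\mu$. I would then split the sum at the integer nearest the crossover scale $\mu_0$ determined by $2^{\mu_0(s_1-s_0)}=M_1/M_0$, using the increasing bound for $\mu\le\mu_0$ and the decreasing one for $\mu>\mu_0$. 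Each piece is a geometric series summing, up to a constant depending only on $q$ and $s_1-s_0$, to the common crossover value, and since $(s-s_0)/(s_1-s_0)=\theta$ a short computation shows that value equals $M_0^{1-\theta}M_1^\theta$. This yields the desired pointwise bound, the case $q=\infty$ being immediate.

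Finally I would integrate. Raising the pointwise bound to the $p$-th power, multiplying by $w$ and using the hypothesis $w=w_0^{(1-\theta)p/p_0}w_1^{\theta p/p_1}$, the integrand factors as $(M_0^{p_0}w_0)^{(1-\theta)p/p_0}(M_1^{p_1}w_1)^{\theta p/p_1}$. Since $\frac{(1-\theta)p}{p_0}+\frac{\theta p}{p_1}=1$ by the interpolation relation for $p$, Hölder's inequality with the conjugate exponents $\frac{p_0}{(1-\theta)p}$ and $\frac{p_1}{\theta p}$ gives $\|f\|_{F^s_{p,q}(w)}^p\le C\big(\int M_0^{p_0}w_0\big)^{(1-\theta)p/p_0}\big(\int M_1^{p_1}w_1\big)^{\theta p/p_1}$; taking $p$-th roots and undoing the endpoint reduction via the embeddings above completes the proof.

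I expect the pointwise estimate to be the main obstacle: identifying the crossover splitting and verifying that the two geometric series collapse to exactly $M_0^{1-\theta}M_1^\theta$ is where the structural hypotheses $s_0<s_1$ and $\theta=(s-s_0)/(s_1-s_0)$ are used in an essential way. The integration step is a routine application of Hölder once the weight factorization is recognized to align exactly with the exponent relation for $p$, and the reduction to $q_0=q_1=\infty$ relies only on the already-available monotonicity embedding.
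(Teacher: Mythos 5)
Your proof is correct, but note that the paper itself gives no proof of this statement: it is imported verbatim from \cite[Proposition 5.1]{MV}, so there is no internal argument to compare against. Your three steps --- reduction to $q_0=q_1=\infty$ via the trivial monotonicity $\ell^{q_i}\hookrightarrow\ell^{\infty}$, the pointwise crossover estimate $\bigl(\sum_{\mu\ge 0}(2^{\mu s}|S_\mu f(x)|)^q\bigr)^{1/q}\le C\,M_0(x)^{1-\theta}M_1(x)^{\theta}$ with $M_i(x)=\sup_{\mu\ge 0}2^{\mu s_i}|S_\mu f(x)|$ (the split at $2^{\mu_0(s_1-s_0)}=M_1/M_0$ works in all cases, including $\mu_0<0$ and degenerate values of $M_0,M_1$), and H\"older's inequality with conjugate exponents $p_0/((1-\theta)p)$ and $p_1/(\theta p)$ matched to the factorization $w=w_0^{(1-\theta)p/p_0}w_1^{\theta p/p_1}$ --- constitute precisely the standard proof of this Gagliardo--Nirenberg type inequality, and are essentially the argument of the cited source. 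One remark worth making: your argument never uses $w,w_0,w_1\in\mathcal{A}_\infty$; that hypothesis is only needed for the underlying theory of the weighted spaces (e.g.\ independence of the Littlewood--Paley system), so the multiplicative estimate itself holds for arbitrary weights satisfying the stated factorization.
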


\begin{lemma}\cite[Lemma 3.1]{MV2}
\label{MV-Ap}
Let $1<p<\infty$ and $w_1, w_2 \in \mathcal{A}_p$. Then, there is $\eta_0>0$ such that, for all $\varepsilon, \delta \in [0,\eta_0)$ one has $w_1^{-\varepsilon}w_2^{1+\delta} \in \mathcal{A}_p$.
\end{lemma}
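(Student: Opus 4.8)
The plan is to reduce the statement to three closure properties of the Muckenhoupt class $\mathcal{A}_p$ and then to exhibit $w_1^{-\varepsilon}w_2^{1+\delta}$ as a \emph{geometric mean} of two weights that individually lie in $\mathcal{A}_p$. For a weight $w$ I write $[w]_{p,B}=\big(\frac{1}{|B|}\int_B w\big)\big(\frac{1}{|B|}\int_B w^{1-p'}\big)^{p-1}$ for the local $\mathcal{A}_p$ quantity, so that $w\in\mathcal{A}_p$ iff $\sup_B[w]_{p,B}<\infty$, and I keep track of the fact that all constants below are uniform in the ball $B$.

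First I would record three facts valid for any $w\in\mathcal{A}_p$. \textbf{(i) Negative small powers:} with $\varepsilon_0=\min\{p-1,(p-1)^{-1}\}>0$, one has $w^{-\varepsilon'}\in\mathcal{A}_p$ for every $\varepsilon'\in[0,\varepsilon_0]$, with $[w^{-\varepsilon'}]_{p,B}\le[w]_{p,B}^{\varepsilon'}$; this is just Jensen's inequality applied to the concave powers arising when $\varepsilon'(p-1)\le1$ and $\varepsilon'/(p-1)\le1$ (using $w^{-1/(p-1)}=w^{1-p'}$). \textbf{(ii) Super-unit powers:} there is $\eta_1>0$ such that $w^{1+\eta'}\in\mathcal{A}_p$ for every $\eta'\in[0,\eta_1)$. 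This is the only genuinely nontrivial ingredient: it rests on the reverse Hölder inequality, which furnishes a common exponent $r>1$ and a constant $C$ with $\big(\frac{1}{|B|}\int_B w^r\big)^{1/r}\le C\,\frac{1}{|B|}\int_B w$ and the analogous bound for the dual weight $w^{1-p'}\in\mathcal{A}_{p'}$; taking $\eta_1=r-1$ and multiplying the two reverse Hölder estimates gives $[w^{1+\eta'}]_{p,B}\le C'\,[w]_{p,B}^{1+\eta'}$, uniformly in $B$. \textbf{(iii) Geometric means:} if $u,v\in\mathcal{A}_p$ and $\theta\in[0,1]$, then $u^\theta v^{1-\theta}\in\mathcal{A}_p$, with $[u^\theta v^{1-\theta}]_{p,B}\le[u]_{p,B}^\theta\,[v]_{p,B}^{1-\theta}$; this is immediate from Hölder's inequality applied separately to $\int_B u^\theta v^{1-\theta}$ and to $\int_B u^{\theta(1-p')}v^{(1-\theta)(1-p')}$.

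With these in hand the remaining argument is purely algebraic. Given target exponents $\varepsilon,\delta$, I would set $\theta=\varepsilon/\varepsilon_0$, $\varepsilon'=\varepsilon_0$, and $\eta'=(\delta+\theta)/(1-\theta)$, so that
\[
 w_1^{-\varepsilon}w_2^{1+\delta}=\big(w_1^{-\varepsilon'}\big)^{\theta}\,\big(w_2^{1+\eta'}\big)^{1-\theta},
\]
since $-\varepsilon'\theta=-\varepsilon$ and $(1+\eta')(1-\theta)=1+\delta$ by construction (when $\varepsilon=0$ this degenerates to $\theta=0$ and $w_1^{-\varepsilon}w_2^{1+\delta}=w_2^{1+\delta}$, covered directly by (ii)). By (i) the base $w_1^{-\varepsilon'}=w_1^{-\varepsilon_0}$ lies in $\mathcal{A}_p$, by (ii) the base $w_2^{1+\eta'}$ lies in $\mathcal{A}_p$ provided $\eta'<\eta_1$, and then (iii) yields $w_1^{-\varepsilon}w_2^{1+\delta}\in\mathcal{A}_p$. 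To fix $\eta_0$, note that on the region $\varepsilon,\delta\in[0,\eta_0)$ one has $\theta\le\eta_0/\varepsilon_0$ and $\eta'\le(\eta_0+\eta_0/\varepsilon_0)/(1-\eta_0/\varepsilon_0)$, so it suffices to take $\eta_0$ small enough (depending only on $p$ through $\varepsilon_0$ and on the reverse-Hölder exponent $\eta_1$ of $w_2$) that $\theta<1$ and $\eta'<\eta_1$ for all admissible $\varepsilon,\delta$.

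I expect the main obstacle to be fact (ii), the self-improvement of $\mathcal{A}_p$ under slightly larger powers: this is where the reverse Hölder inequality enters and where one must check that the exponent $r$ and the constant are uniform over all balls (they are, being part of the standard $\mathcal{A}_p$ theory). By contrast, facts (i) and (iii) are elementary consequences of Jensen's and Hölder's inequalities with constants uniform in $B$, and the final step is merely the bookkeeping that the auxiliary parameters $\theta,\varepsilon',\eta'$ remain in their admissible ranges once $\eta_0$ is chosen small.
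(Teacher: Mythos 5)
Your proposal is correct, but note that the paper itself contains no proof of this statement: it is quoted verbatim from Meyries--Veraar \cite[Lemma 3.1]{MV2} as a known ingredient, so there is no internal argument to compare against, and your write-up supplies exactly what the paper omits. All three of your building blocks check out. Fact (i) is right as stated: writing $w^{-\varepsilon'}=(w^{1-p'})^{\varepsilon'(p-1)}$ and $w^{\varepsilon'(p'-1)}=w^{\varepsilon'/(p-1)}$, Jensen's inequality applies precisely when $\varepsilon'(p-1)\le 1$ and $\varepsilon'/(p-1)\le 1$, giving $[w^{-\varepsilon'}]_{p,B}\le [w]_{p,B}^{\varepsilon'}$ with a threshold $\varepsilon_0=\min\{p-1,(p-1)^{-1}\}$ depending only on $p$. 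Fact (ii) is the standard openness property: the reverse H\"older exponent and constant depend only on the $\mathcal{A}_p$ characteristic and the dimension, hence are uniform over balls, and applying reverse H\"older simultaneously to $w$ and to the dual weight $w^{1-p'}\in\mathcal{A}_{p'}$ (with a common exponent $r$) yields $[w^{1+\eta'}]_{p,B}\le C[w]_{p,B}^{1+\eta'}$ for $0\le\eta'\le r-1$. Fact (iii) is immediate from H\"older exactly as you say. The algebra also verifies: with $\theta=\varepsilon/\varepsilon_0$ and $\eta'=(\delta+\theta)/(1-\theta)$ one indeed has $-\varepsilon_0\theta=-\varepsilon$ and $(1+\eta')(1-\theta)=1+\delta$, and since $\eta'\to 0$ as $\eta_0\to 0$, a sufficiently small $\eta_0$ (depending on $p$ through $\varepsilon_0$ and on the reverse H\"older exponent of $w_2$ and its dual) keeps $\theta<1$ and $\eta'<\eta_1$, which is all the lemma's quantifiers require. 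This geometric-mean factorization combined with the self-improvement of $\mathcal{A}_p$ under powers slightly above $1$ is essentially the classical route to such perturbation statements (and is in the spirit of the argument in \cite{MV2}); its advantage here is that it isolates the only nontrivial input, the reverse H\"older inequality, in a single place, with everything else reduced to Jensen and H\"older with constants uniform in the ball.
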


\begin{lemma}\cite[Lemma 3.2]{MV2}
\label{MV-cubos}
Let $w_1, w_2 \in \mathcal{A}_\infty$. Then there are $\eta_0>0$ and a constant $C>0$ such that for all $\varepsilon, \delta \in (0,\eta_0)$ and all cubes $Q\subset \mathbb{R}^n$ we have
$$
\int_Q w_1^{-\varepsilon} w_2^{1+\delta} \, dx \le C |Q|^{\varepsilon-\delta} \left(\int_Q w_1 \, dx \right)^{-\varepsilon} \left(\int_Q w_2 \, dx \right)^{1+\delta}.
$$
\end{lemma}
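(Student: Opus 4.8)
The plan is to reduce the assertion to an averaged (scale-invariant) form and then establish it by Hölder's inequality together with the reverse Hölder and $\mathcal{A}_p$ properties of the two weights. Writing $\langle g\rangle_Q := |Q|^{-1}\int_Q g$, a direct bookkeeping of the powers of $|Q|$ shows that the factors $|Q|^{\varepsilon-\delta}$, $|Q|^{-\varepsilon}$ and $|Q|^{1+\delta}$ coming from the right-hand side combine to a single factor $|Q|^{1}$, so the claimed estimate is equivalent to
$$\langle w_1^{-\varepsilon}w_2^{1+\delta}\rangle_Q \le C\,\langle w_1\rangle_Q^{-\varepsilon}\,\langle w_2\rangle_Q^{1+\delta}$$
uniformly in $Q$. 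At $\varepsilon=\delta=0$ this holds trivially with $C=1$, so the content of the lemma is a stability statement for small exponents.

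Since $w_1,w_2\in\mathcal{A}_\infty$, I would first fix a single $p\in(1,\infty)$ with $w_1,w_2\in\mathcal{A}_p$ and a reverse Hölder exponent $\sigma>1$ for $w_2$, that is $\langle w_2^\sigma\rangle_Q^{1/\sigma}\le C\langle w_2\rangle_Q$ for all $Q$. I would then apply Hölder's inequality to $\langle w_1^{-\varepsilon}w_2^{1+\delta}\rangle_Q$ with a \emph{fixed} conjugate pair $(a,a')$ chosen so that $1<a'<\sigma$, for instance $a'=(1+\sigma)/2$ and $a=a'/(a'-1)$, obtaining
$$\langle w_1^{-\varepsilon}w_2^{1+\delta}\rangle_Q \le \langle w_1^{-\varepsilon a}\rangle_Q^{1/a}\,\langle w_2^{(1+\delta)a'}\rangle_Q^{1/a'}.$$
The factor involving $w_2$ is the easy one: setting $b:=(1+\delta)a'$ and using power-mean monotonicity followed by reverse Hölder, whenever $b\le\sigma$ one has $\langle w_2^{b}\rangle_Q^{1/b}\le\langle w_2^\sigma\rangle_Q^{1/\sigma}\le C\langle w_2\rangle_Q$; raising to the power $1+\delta=b/a'$ gives $\langle w_2^{(1+\delta)a'}\rangle_Q^{1/a'}\le C\langle w_2\rangle_Q^{1+\delta}$.

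The remaining, and genuinely delicate, factor is the negative power $\langle w_1^{-\varepsilon a}\rangle_Q^{1/a}$: here Jensen's inequality points the wrong way, so the bound $\langle w_1^{-\varepsilon a}\rangle_Q\le C\langle w_1\rangle_Q^{-\varepsilon a}$ must be extracted from the $\mathcal{A}_p$ condition. Writing $v_1:=w_1^{-1/(p-1)}$, the $\mathcal{A}_p$ inequality for $w_1$ reads $\langle v_1\rangle_Q\le [w_1]_{\mathcal{A}_p}^{1/(p-1)}\langle w_1\rangle_Q^{-1/(p-1)}$, and since $w_1^{-\varepsilon a}=v_1^{\varepsilon a(p-1)}$ I would apply Jensen to the \emph{concave} map $t\mapsto t^{\varepsilon a(p-1)}$, which is legitimate exactly when $\varepsilon a(p-1)\le1$, to get $\langle v_1^{\varepsilon a(p-1)}\rangle_Q\le\langle v_1\rangle_Q^{\varepsilon a(p-1)}\le C\langle w_1\rangle_Q^{-\varepsilon a}$. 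Finally I would take $\eta_0:=\min\{(\sigma-1)/(\sigma+1),\,1/(a(p-1))\}$, which for all $\varepsilon,\delta\in(0,\eta_0)$ simultaneously forces $(1+\delta)a'\le\sigma$ and $\varepsilon a(p-1)\le1$; combining the two factor estimates then yields the displayed averaged inequality with a constant depending only on $w_1,w_2$ (through $p$, $\sigma$ and the $\mathcal{A}_p$ constant) but independent of $Q$, $\varepsilon$ and $\delta$, which is equivalent to the stated estimate. The main obstacle throughout is precisely the negative-power factor, where the correct direction of the inequality is supplied by the $\mathcal{A}_p$ hypothesis rather than by convexity alone, and where one must keep the Jensen exponent below $1$, this being exactly what fixes the threshold $\eta_0$.
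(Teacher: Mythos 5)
Your proof is correct. Note that the paper itself does not prove this lemma at all: it is quoted verbatim, with citation, from Meyries--Veraar \cite[Lemma 3.2]{MV2}, so there is no internal argument to compare against; your contribution is a self-contained proof of a black-boxed ingredient. Your route is the standard one (and essentially the one in \cite{MV2}): the reduction to the averaged inequality $\langle w_1^{-\varepsilon}w_2^{1+\delta}\rangle_Q \le C\,\langle w_1\rangle_Q^{-\varepsilon}\langle w_2\rangle_Q^{1+\delta}$ is correct (the powers of $|Q|$ do combine to $|Q|^1$), the H\"older split with a fixed conjugate pair $1<a'<\sigma$ is legitimate, the reverse-H\"older bound handles the $w_2$ factor precisely when $(1+\delta)a'\le\sigma$, i.e. $\delta\le(\sigma-1)/(\sigma+1)$ for your choice $a'=(1+\sigma)/2$, and the negative-power factor is correctly controlled by combining the $\mathcal{A}_p$ condition for $w_1$ with Jensen for the concave map $t\mapsto t^{\varepsilon a(p-1)}$, valid exactly when $\varepsilon a(p-1)\le 1$; your $\eta_0$ enforces both constraints, and the constants $[w_1]_{\mathcal{A}_p}^{\varepsilon a}$ and $C^{1+\delta}$ remain uniformly bounded over $\varepsilon,\delta\in(0,\eta_0)$, so the final constant is independent of $Q$, $\varepsilon$ and $\delta$ as required.
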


Since our functions are supported on annuli instead of cubes, we will need another auxiliary lemma on the behavior of products of radial Muckenhoupt weights over these sets. To this end, we first recall the following characterization of radial $\mathcal{A}_p$ weights given by Duoandikoetxea et al. in \cite{D}:

\begin{lemma}\cite[Theorem 3.2]{D}
\label{Ap-radiales}
Let $w_0: (0,\infty) \to [0,\infty]$ and $w_n(x)=w_0(|x|)$ for $x\in \mathbb{R}^n$. Then $w_n$ is in $\mathcal{A}_p(\mathbb{R}^n)$ if and only if $\delta_nw_0$ is in $\mathcal{A}_p(0,+\infty)$, where $\delta_n w_0(t)=w_0(t^{1/n}).$
\end{lemma}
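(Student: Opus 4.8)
The statement to prove is Lemma \ref{Ap-radiales}, the characterization of radial $\mathcal{A}_p$ weights: $w_n(x)=w_0(|x|)$ lies in $\mathcal{A}_p(\R^n)$ if and only if the one-dimensional weight $\delta_n w_0(t)=w_0(t^{1/n})$ lies in $\mathcal{A}_p(0,+\infty)$. The plan is to reduce the $n$-dimensional $\mathcal{A}_p$ condition over balls to a one-dimensional $\mathcal{A}_p$ condition over intervals by passing to polar coordinates and then performing the change of variables $r\mapsto r^n$ that turns the radial measure $r^{n-1}\,dr$ into Lebesgue measure $dt$.

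First I would reduce from arbitrary balls to balls centered at the origin. The key geometric observation is that for a ball $B=B(x_0,R)\subset\Rn$, either $B$ is comparable to an annular region (if $R$ is small relative to $|x_0|$, so that $w_n$ is roughly constant on $B$, making the $\mathcal{A}_p$ quotient bounded automatically) or $R\gtrsim |x_0|$, in which case $B$ is contained in and contains comparable multiples of origin-centered balls. Thus, up to constants, the full $\mathcal{A}_p$ condition over \emph{all} balls is equivalent to the $\mathcal{A}_p$ condition tested only over balls centered at $0$, i.e.\ over sets of the form $\{|x|\le R\}$. This reduction is standard for radial weights but must be stated carefully; it is what lets us work purely in the radial variable.

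Next, for an origin-centered ball $B(0,R)$ I would write the averages in polar coordinates:
$$
\frac{1}{|B(0,R)|}\int_{B(0,R)} w_n \, dx
= \frac{\omega_{n-1}}{|B(0,R)|}\int_0^R w_0(r)\,r^{n-1}\,dr,
$$
and similarly for $w_n^{1-p'}=w_0(r)^{1-p'}$, with $|B(0,R)|=\omega_{n-1}R^n/n$. Then I would apply the substitution $t=r^n$, $dt=n r^{n-1}\,dr$, which converts each radial integral into $\int_0^{R^n}\delta_n w_0(t)\,dt$ (and the analogous one with $\delta_n w_0^{1-p'}$) while turning the normalizing factor $R^n$ into the length of the interval $(0,R^n)$. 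After this change of variables the $n$-dimensional $\mathcal{A}_p$ quotient over $B(0,R)$ becomes \emph{exactly} the one-dimensional $\mathcal{A}_p$ quotient for $\delta_n w_0$ over the interval $(0,R^n)$, up to the universal constant coming from $\omega_{n-1}/n$. Since as $R$ ranges over $(0,\infty)$ the endpoint $R^n$ ranges over all of $(0,\infty)$, boundedness of one family of quotients is equivalent to boundedness of the other.

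The main obstacle is the final matching between intervals $(0,R^n)$ of the one-dimensional weight and \emph{arbitrary} intervals $(a,b)\subset(0,\infty)$ required by the genuine one-dimensional $\mathcal{A}_p$ condition. The same dichotomy used in the first step resolves this: a one-dimensional weight whose $\mathcal{A}_p$ quotient is controlled on all intervals touching the origin, combined with the doubling/monotone-comparability structure inherited from the radial geometry, satisfies the $\mathcal{A}_p$ condition on all intervals. Concretely, an interval $(a,b)$ far from $0$ either has $b\lesssim 2a$, where $\delta_n w_0$ is comparable to the weight on $(0,b)$ restricted suitably, or $b\gtrsim 2a$, where $(a,b)$ is comparable to $(0,b)$; in either case the quotient is dominated by one tested on an origin-anchored interval. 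I would cite \cite{D} for the precise form of this equivalence, since it is exactly the content being recalled, and present the polar-coordinate computation as the substantive verification.
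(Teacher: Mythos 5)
Your outline has a genuine gap, and it sits precisely in the step you treat as routine. The dichotomy in your first reduction is false: a radial weight is constant only in the angular direction, while in the radial direction it may oscillate at every scale, so on a ball $B(x_0,R)$ with $R\ll|x_0|$ the weight $w_n$ need not be ``roughly constant'' and its $\mathcal{A}_p$ quotient is not ``bounded automatically''. Consequently the $\mathcal{A}_p$ condition tested only on origin-centered balls (equivalently, after your change of variables, only on intervals $(0,T)$) is strictly weaker than $\mathcal{A}_p(\mathbb{R}^n)$. Concrete counterexample: let $h(t)=|t-1|^{-1}\bigl(\log(e/|t-1|)\bigr)^{-2}$ for $|t-1|\le 1/2$ and $h(t)=1$ otherwise, and set $w_0(r)=h(r^n)$, so that $\delta_n w_0=h$. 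Then $h$ is locally integrable, $h^{1-p'}$ is bounded near $t=1$, and all quotients over intervals $(0,T)$ are uniformly bounded, while over $(1-\varepsilon,1+\varepsilon)$ the quotient grows like $\log(e/\varepsilon)$; likewise $w_n(x)=h(|x|^n)$ has unbounded quotients over small balls centered at points of the unit sphere. So both your first reduction and your final paragraph (where the same principle is invoked in one dimension, with an appeal to a ``doubling/monotone-comparability structure inherited from the radial geometry'' that simply does not exist for a general profile $w_0$) break down: testing on origin-anchored sets can never yield the full $\mathcal{A}_p$ condition.

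What is missing is the actual mechanism behind \cite[Theorem 3.2]{D}: rotational invariance used quantitatively. The polar-coordinate computation you carry out is correct but is the easy half; it identifies the quotient of $w_n$ over an origin-centered annulus $\{a\le|x|\le b\}$ with the quotient of $\delta_n w_0$ over $(a^n,b^n)$ (this is exactly the computation the paper uses to prove Lemma \ref{lema-anillos}; note that Lemma \ref{Ap-radiales} itself is quoted from \cite{D} without proof, so there is no in-paper argument to compare against). The substantive step is the equivalence, for radial weights, between the ball condition and the annulus condition. For a thin annulus ($b\le 2a$) one partitions it into roughly $(a/(b-a))^{n-1}$ congruent spherical boxes of diameter comparable to $b-a$, each sandwiched between balls of comparable radius; since $w_n$ and $w_n^{1-p'}$ are radial, their averages over the annulus coincide with their averages over a single box, hence are comparable to averages over balls. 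Running this comparison in both directions, together with the trivial case $b\ge 2a$ in which the annulus is comparable to the ball $B(0,b)$, is what transfers the $\mathcal{A}_p$ condition between balls in $\mathbb{R}^n$ and arbitrary intervals in $(0,\infty)$; without this ingredient the proof does not close.
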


\begin{lemma} 
\label{lema-anillos}
Let $w_1, w_2 \in \mathcal{A}_\infty$, $w_1(x)=\tilde w_1(|x|), w_2(x)=\tilde w_2(|x|)$ for all $x\in \mathbb{R}^n$. Then, there exists $\eta_0>0$ such that for all $\varepsilon \in (0,\eta_0)$ and any annnulus $D_{ab}=\{x\in \mathbb{R}^n : a\le |x|\le b\}, a,b \in \mathbb{R}_+$,
$$
\int_{D_{ab}} w_1^{-\varepsilon} w_2^{1+\varepsilon} \, dx \le C  \left(\int_{D_{ab}} w_1 \, dx\right)^{-\varepsilon} \left(\int_{D_{ab}} w_2 \, dx\right)^{1+\varepsilon} 
$$
\end{lemma}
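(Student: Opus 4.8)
The plan is to reduce Lemma \ref{lema-anillos} to the already-established cube estimate of Lemma \ref{MV-cubos} by exploiting the radial structure through the change of variables of Lemma \ref{Ap-radiales}. First I would pass to the one-dimensional picture: writing $w_i(x)=\tilde w_i(|x|)$ and using polar coordinates, an integral of the form $\int_{D_{ab}} g(|x|)\,dx$ equals $\omega_{n-1}\int_a^b g(r)\,r^{n-1}\,dr$. Applying the substitution $t=r^n$ (so $r^{n-1}\,dr = \tfrac{1}{n}\,dt$) turns this into a weighted integral on the interval $(a^n,b^n)\subset(0,\infty)$ with the transformed profiles $\delta_n\tilde w_i(t)=\tilde w_i(t^{1/n})$. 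By Lemma \ref{Ap-radiales}, the radial membership $w_i\in\A_\infty(\Rn)$ is equivalent to $\delta_n\tilde w_i\in\A_\infty(0,+\infty)$, so the two one-dimensional weights still lie in a Muckenhoupt class and Lemma \ref{MV-cubos} (with $n=1$, where a cube is just an interval, and with $\varepsilon=\delta$) becomes applicable.

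The core of the argument is then to apply Lemma \ref{MV-cubos} in dimension one to the interval $Q=(a^n,b^n)$ and the weights $W_i:=\delta_n\tilde w_i$. This yields an $\eta_0>0$ such that for $\varepsilon\in(0,\eta_0)$,
\begin{equation*}
\int_{a^n}^{b^n} W_1^{-\varepsilon}W_2^{1+\varepsilon}\,dt
 \le C\,|Q|^{\varepsilon-\varepsilon}\Big(\int_{a^n}^{b^n}W_1\,dt\Big)^{-\varepsilon}\Big(\int_{a^n}^{b^n}W_2\,dt\Big)^{1+\varepsilon},
\end{equation*}
where the factor $|Q|^{\varepsilon-\delta}$ collapses to $1$ precisely because we chose $\delta=\varepsilon$. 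The final step is to translate each of these one-dimensional integrals back to the annulus via the inverse substitution. The key observation is that the change of variables $t=r^n$ is applied identically on both sides, so all the factors of $\tfrac{1}{n}$ and $\omega_{n-1}$ arising from polar coordinates appear to matching powers and are absorbed into the constant $C$. Concretely, $\int_{a^n}^{b^n}W_1^{-\varepsilon}W_2^{1+\varepsilon}\,dt$ is a constant multiple of $\int_{D_{ab}}w_1^{-\varepsilon}w_2^{1+\varepsilon}\,dx$, and likewise $\int_{a^n}^{b^n}W_i\,dt$ is a constant multiple of $\int_{D_{ab}}w_i\,dx$; substituting these identities gives exactly the claimed inequality.

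The main obstacle I anticipate is bookkeeping rather than conceptual: one must verify that the various constants ($\omega_{n-1}$, the Jacobian $1/n$, and the constant from Lemma \ref{MV-cubos}) combine into a single $\varepsilon$-independent constant $C$, which requires checking that the powers $-\varepsilon$ and $1+\varepsilon$ on the right-hand side produce net exponents of the geometric constants that are bounded uniformly as $\varepsilon$ ranges over $(0,\eta_0)$. Since $\varepsilon-\varepsilon+( -\varepsilon)+(1+\varepsilon)=1$ on the relevant factors and $\eta_0$ is fixed, these exponents stay in a compact range and the constant is indeed uniform. A secondary point requiring care is that Lemma \ref{Ap-radiales} is stated for $\A_p$ with fixed $p$, so I would first fix $p$ with $w_1,w_2\in\A_p(\Rn)$ (possible since $\A_\infty=\bigcup_p\A_p$ and one may take a common $p$ by the nesting $\A_p\subset\A_{p'}$ for $p<p'$), then transfer to $W_1,W_2\in\A_p(0,\infty)$, and finally invoke Lemma \ref{MV-cubos}, whose hypothesis only requires the $\A_\infty$ membership that $\A_p$ membership provides.
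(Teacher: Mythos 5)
Your proposal is correct and follows essentially the same route as the paper's own proof: polar coordinates plus the substitution $t=r^n$, the transfer of $\mathcal{A}_p$ membership to $(0,+\infty)$ via Lemma \ref{Ap-radiales}, and an application of Lemma \ref{MV-cubos} in dimension one with $\delta=\varepsilon$ so that the factor $|Q|^{\varepsilon-\delta}$ disappears. Your additional remarks on fixing a common $p$ and on the uniformity of the constants are exactly the bookkeeping the paper performs implicitly.
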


\begin{proof}
Fix $p>1$ such that  $w_1, w_2 \in \mathcal{A}_p$, let $\eta_0$ be as in Lemma \ref{MV-cubos} and $\varepsilon \in (0, \eta_0)$. Taking polar coordinates we obtain
\begin{align*}
\int_{D_{ab}} w_1^{-\varepsilon} w_2^{1+\varepsilon} \, dx  &= \omega_{n-1} \int_a^b \tilde w_1^{-\varepsilon} \,  \tilde w_2^{1+\varepsilon} r^{n-1} \, dr \\
&= \omega_{n-1} \int_{a^n}^{b^n} (\delta_n \tilde w_1)^{-\varepsilon} \,  (\delta_n \tilde w_2)^{1+\varepsilon}\, \frac{dr}{n}\\
&\le C \omega_{n-1}  \left( \int_a^b \delta_n \tilde w_1 \, dr \right)^{-\varepsilon}  \left( \int_a^b \delta_n \tilde w_2 \, dr \right)^{1+\varepsilon}
\end{align*}
where the last bound follows from Lemma \ref{MV-cubos}, and we have used the fact that $\delta_n \tilde w_1, \delta_n \tilde w_2 \in \mathcal{A}_p(0,+\infty)$ by Lemma \ref{Ap-radiales}. Changing variables again we obtain the desired bound.
\end{proof}

Now we are ready to prove our result for Triebel-Lizorkin spaces: 

\begin{theorem}
\label{main-theorem-TL} Let $1\le p_1\le p_2\le \infty$, $q_1, q_2\in[1, \infty]$ and $w_1,w_2$ be radially symmetric $\mathcal{A}_\infty$-weights.   
There is a continuous embedding $RF_{p_1,q_1}^{s_1}(\Rn, w_1)\to RF_{p_2,q_2}^{s_2}(\Rn, w_2)$ provided that
\begin{equation}
\label{cond-TL}
\sup_{\mu, k} \left\{2^{-\mu(s_1-s_2)} \frac{w^2_{\mu k}}{w^1_{\mu k}} \right\}< +\infty
\end{equation}

where 
$$ w^1_{\mu k} = \|\tilde\chi_{A_{\mu k}}\|_{L^{p_1}(\Rn, w_1)},\quad 
  w^2_{\mu k} = \|\tilde\chi_{A_{\mu k}}\|_{L^{p_2}(\Rn, w_2)}, $$ 

The embedding is compact provided that \eqref{cond-TL} holds and
\begin{equation*}
\begin{split} 
 & \lim_{|k|\to +\infty} \frac{w^1_{\mu k}}{w^2_{\mu k}} = \infty \quad \text{ for all $\mu\ge 0$}.  
\end{split} 
\end{equation*} 

\end{theorem}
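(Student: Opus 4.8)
The plan is to follow the scheme of \cite{MV2} and \emph{derive} the Triebel--Lizorkin embedding from the Besov embedding already established in Theorem \ref{main-theorem}, using the Gagliardo--Nirenberg inequality of Proposition \ref{propositionMV} as the bridge between the two scales. Since the hypothesis \eqref{cond-TL} does not involve $q_1,q_2$, I would first reduce, by the elementary monotonicity embeddings of Theorem \ref{elementary-embeddings}(1), to the most demanding choice of microscopic indices, and then bound the target norm $\|f\|_{RF^{s_2}_{p_2,q_2}(\Rn,w_2)}$ by interpolating it, via Proposition \ref{propositionMV}, between the source space $RF^{s_1}_{p_1,q_1}(\Rn,w_1)$ and an auxiliary Triebel--Lizorkin space whose weight is the product $w_0:=w_1^{-\varepsilon}w_2^{1+\varepsilon}$. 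Writing out the Gagliardo--Nirenberg relations for the exponents $(\theta,p_0,s_0)$ of the auxiliary endpoint, one finds $p_0=\frac{(1-\theta)p_1p_2}{p_1-\theta p_2}$ and $s_0=\frac{s_2-\theta s_1}{1-\theta}$, so that $\varepsilon\to0$ as $\theta\to0$; a pleasant bookkeeping check is that the requirement that the two weights enter with the \emph{same} exponent $-\varepsilon$ and $1+\varepsilon$ (as in Lemma \ref{lema-anillos}, rather than the unequal exponents of Lemma \ref{MV-cubos}) is forced automatically by these relations.

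Once the interpolation is set up, it suffices to bound the auxiliary norm by the source norm, i.e. to prove $RF^{s_1}_{p_1,q_1}(\Rn,w_1)\hookrightarrow RF^{s_0}_{p_0,q_0}(\Rn,w_0)$. Here I would pass through Besov spaces by sandwiching with Theorem \ref{elementary-embeddings}(3), using $RF^{s_1}_{p_1,q_1}(w_1)\hookrightarrow RB^{s_1}_{p_1,\cdot}(w_1)$ and $RB^{s_0}_{p_0,\cdot}(w_0)\hookrightarrow RF^{s_0}_{p_0,q_0}(w_0)$, and invoke the Besov embedding Theorem \ref{main-theorem} for the middle step. This is exactly where the two auxiliary lemmas enter: Lemma \ref{MV-Ap} (together with the radial characterisation of Lemma \ref{Ap-radiales}) guarantees $w_0\in\A_\infty$ for $\varepsilon$ small, while Lemma \ref{lema-anillos} gives $\int_{A_{\mu k}}w_0\lesssim(\int_{A_{\mu k}}w_1)^{-\varepsilon}(\int_{A_{\mu k}}w_2)^{1+\varepsilon}$ over the annuli. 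Feeding this into $w^0_{\mu k}=\|\tilde\chi_{A_{\mu k}}\|_{L^{p_0}(\Rn,w_0)}$, the volume factors $|A_{\mu k}|$ cancel by the Gagliardo--Nirenberg relations and one obtains $w^0_{\mu k}\lesssim (w^1_{\mu k})^{-\varepsilon p_1/p_0}(w^2_{\mu k})^{(1+\varepsilon)p_2/p_0}$, so that the pointwise ingredient of the condition of Theorem \ref{main-theorem} for this middle embedding collapses, after simplification, to (a power of) \eqref{cond-TL}; the summation over $\mu$ is the subtle point, discussed next.

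I expect the genuine difficulty to be the \emph{Franke--Jawerth phenomenon}: the fact that \eqref{cond-TL} is a pure supremum, with no dependence on $q_1,q_2$ and no decay in $\mu$ (it may be saturated, e.g. along a critical Sobolev line). A naive Besov sandwich converts $F\to B\to B\to F$ with a strict change of integrability $p_1<p_0$, which unavoidably produces a finite $q^*$ in the middle step and therefore asks for $\ell_{q^*}$-summability in $\mu$ that a bare supremum does not supply. Overcoming this is precisely why the Gagliardo--Nirenberg inequality, rather than the elementary sandwich alone, is needed: the interpolation must be arranged so that the smoothness gap it creates between the endpoints delivers the geometric decay in $\mu$ required to sum, and the free parameter $\varepsilon$ (equivalently $\theta$) must be tuned against the weight growth produced by $w_1^{-\varepsilon}w_2^{1+\varepsilon}$ so that the two effects combine to give summability rather than cancel. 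Getting this balance right, uniformly in $q_1,q_2$, is the heart of the matter.

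Finally, for the compactness assertion I would transfer compactness through the Gagliardo--Nirenberg inequality. Under the extra hypothesis $\lim_{|k|\to\infty}w^1_{\mu k}/w^2_{\mu k}=\infty$, Lemma \ref{lema-anillos} shows the corresponding condition $\lim_{|k|\to\infty}w^1_{\mu k}/w^0_{\mu k}=\infty$ holds, so the compactness clause of Theorem \ref{main-theorem} makes the auxiliary embedding $RF^{s_1}_{p_1,q_1}(w_1)\hookrightarrow RF^{s_0}_{p_0,q_0}(w_0)$ compact. Given a bounded sequence $(f_j)$ in the source, I would extract a subsequence converging in the auxiliary space and apply Proposition \ref{propositionMV} to the pairwise differences: the inequality $\|f_j-f_k\|_{RF^{s_2}_{p_2,q_2}(w_2)}\le C\|f_j-f_k\|_{RF^{s_0}_{p_0,q_0}(w_0)}^{1-\theta}\|f_j-f_k\|_{RF^{s_1}_{p_1,q_1}(w_1)}^{\theta}$ then forces the subsequence to be Cauchy in the target, proving compactness of the embedding in Theorem \ref{main-theorem-TL}.
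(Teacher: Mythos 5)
Your toolkit and your interpolation setup coincide with the paper's: reduction of the microscopic indices via Theorem \ref{elementary-embeddings}, Proposition \ref{propositionMV} with the \emph{target} space as the interpolated middle space, the auxiliary weight $w_1^{-\varepsilon}w_2^{1+\varepsilon}$ controlled on annuli by Lemmas \ref{Ap-radiales} and \ref{lema-anillos}, membership in $\A_\infty$ via Lemma \ref{MV-Ap}, and Theorem \ref{main-theorem} to supply the auxiliary embedding. The genuine gap is the direction in which you run that auxiliary embedding. You propose to prove $RF^{s_1}_{p_1,q_1}(\Rn,w_1)\hookrightarrow RF^{s_0}_{p_0,q_0}(\Rn,w_0)$, i.e.\ an embedding \emph{from the source}, through a Besov sandwich. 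That sandwich puts index $\max\{p_1,q_1\}$ on the left and $\min\{p_0,q_0\}$ on the right, so $q^*$ is finite whenever $q_1>p_0$ (e.g.\ always when $q_1=\infty$, since the Gagliardo--Nirenberg relations keep $p_0$ finite as long as $\varepsilon$ stays in the range where Lemmas \ref{MV-Ap} and \ref{lema-anillos} apply), and Theorem \ref{main-theorem} then demands $\ell_{q^*}$-summability in $\mu$ of $2^{-\mu(s_1-s_0)}\sup_k\bigl(w^0_{\mu k}/w^1_{\mu k}\bigr)$, where $w^0_{\mu k}=\|\tilde\chi_{A_{\mu k}}\|_{L^{p_0}(\Rn,w_0)}$. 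Your own bookkeeping shows this quantity is comparable to $\bigl(2^{-\mu(s_1-s_2)}\sup_k(w^2_{\mu k}/w^1_{\mu k})\bigr)^{1/(1-\theta)}$: the smoothness gap and the weight growth cancel \emph{exactly}, for every admissible choice of the free parameter, because the cancellation is forced by the same relations that force the exponents $-\varepsilon$ and $1+\varepsilon$. Under the bare hypothesis \eqref{cond-TL} this sequence is bounded but in general not summable (for power weights on the critical Sobolev line it is constant in $\mu$), so no tuning of $\theta$ or $\varepsilon$ produces the decay you hope for; the obstruction you flag in your third paragraph as ``the heart of the matter'' is real and is left unresolved, and your compactness argument rests on the same unproved source-to-auxiliary embedding.

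The paper closes the argument by reversing the direction: the auxiliary embedding is proved \emph{from the target}, and the target norm is then absorbed. Having reduced to $q_2\le\min\{p_1,p_2\}$, one takes $\frac{1}{p_2}=\frac{1-\theta}{p_1}+\frac{\theta}{r}$, $s_2=(1-\theta)s_1+\theta t$, $v=w_1^{-\varepsilon}w_2^{1+\varepsilon}$ with $\theta$ close to $1$, and proves via Theorem \ref{main-theorem} that $RF^{s_2}_{p_2,p_2}(\Rn,w_2)=RB^{s_2}_{p_2,p_2}(\Rn,w_2)\hookrightarrow RB^{t}_{r,r}(\Rn,v)=RF^{t}_{r,r}(\Rn,v)$. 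Here both spaces have equal microscopic indices (no sandwich is needed) and $r\ge p_2$, so $p^*=q^*=\infty$ and Theorem \ref{main-theorem} asks only for a supremum bound, which Lemma \ref{lema-anillos} and the exponent identities $s_2-t=(s_1-s_2)\frac{1-\theta}{\theta}$, $\frac{\varepsilon}{r}=\frac{1}{p_1}\frac{1-\theta}{\theta}$, $\frac{1+\varepsilon}{r}=\frac{1}{\theta p_2}$ reduce to a power of \eqref{cond-TL}. Feeding this into Proposition \ref{propositionMV} and using $q_2\le p_2$ gives $\|f\|_{RF^{s_2}_{p_2,q_2}(\Rn,w_2)}\le C\|f\|_{RF^{s_1}_{p_1,q_1}(\Rn,w_1)}^{1-\theta}\,\|f\|_{RF^{s_2}_{p_2,q_2}(\Rn,w_2)}^{\theta}$, and one divides by $\|f\|_{RF^{s_2}_{p_2,q_2}(\Rn,w_2)}^{\theta}$, first on the intersection of the two spaces and then by density. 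This self-improvement trick of Meyries--Veraar is precisely what makes a pure supremum hypothesis suffice, uniformly in $q_1,q_2$. It also repairs compactness: a bounded sequence in the source is, by the continuity just proved, bounded in $RB^{s_2}_{p_2,p_2}(\Rn,w_2)$; the compactness clause of Theorem \ref{main-theorem} (using the limit hypothesis in $k$) makes the Besov embedding into $RB^t_{r,r}(\Rn,v)$ compact, producing a subsequence convergent in $RF^t_{r,r}(\Rn,v)$; and the interpolation inequality applied to differences, with the \emph{source} norm as the bounded factor, forces convergence in the target.
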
 
\begin{proof}
The proof  has two steps: proving the continuity of the embedding and then the compactness. 

For the first part, we follow closely the approach in \cite{MV2}, which we outline for the reader's convenience. Note that it suffices to prove the continuity of the embedding $RF^{s_1}_{p_1, q_1}(\R^n, w_1) \hookrightarrow RF^{ s_2}_{p_2, q_2}(\R^n, w_2)$ with $q_2 \le \min\{p_1, p_2\}$ since then the result follows from Theorem \ref{elementary-embeddings}(3). 

Since $p_2\ge p_1$, there exists $\theta_0 \in [0,1)$ such that $\frac{1}{p_2}-\frac{1-\theta_0}{p_1}=0$ (in fact, $\theta_0=\frac{p_2}{p_1}-1$). For $\theta\in (\theta_0,1)$, let 
$$\varepsilon=\frac{\frac{1-\theta}{p_1}}{\frac{1}{p_2}-\frac{1-\theta}{p_1}}>0$$
which clearly tends to zero as $\theta \to 1$, and let $v,r,t$ be defined by the identities
$$
v=w_1^{-\varepsilon}w_2^{1+\varepsilon}, \qquad \frac{1}{p_2}=\frac{1-\theta}{p_1}+\frac{\theta}{r}, \qquad s_2=(1-\theta)s_1+\theta t.
$$
Then, one can check that $w_2=w_1^{(1-\theta)p_2/p_1} v^{p_2\theta/r}$, $r\in [p_2, +\infty)$ and $t<s_2<s_1$. Moreover, $v\in A_{p_2}\subseteq A_r$ if $\theta$ is sufficiently close to 1 by Lemma \ref{MV-Ap}. 
Hence, by Proposition \ref{propositionMV}, it holds
\begin{equation}
\label{cota-interpolacion}
\|f\|_{RF_{p_2,q_2}^{s_2}(\Rn, w_2)} \le C \|f\|_{RF_{p_1,q_1}^{s_1}(\Rn, w_1)}^{1-\theta} \|f\|_{RF_{r,r}^{t}(\Rn, v)}^\theta
\end{equation}

Now, since $RB_{p,p}^s=RF_{p,p}^s$ and $r\ge p_2$,  by Theorem \ref{main-theorem} 
$$
\|f\|_{RF_{r,r}^{t}(\Rn, v)}\le C \|f\|_{RF_{p_2,p_2}^{s_2}(\Rn, w_2)}
$$
holds provided that 
\begin{equation}
\label{cond-integrales}
\sup_{k,\mu} 2^{-\mu(s_2-t)} \left( \int_{A_{\mu k}} v \, \right)^{1/r} \left( \int_{A_{\mu k}} w_2\right)^{1/p_2} < +\infty. 
\end{equation}
But, by Lemma \ref{lema-anillos}
 $$
 \int_{A_{\mu k}} v \, \le C  \left(\int_{A_{\mu k}} w_1\right)^{-\varepsilon}\left(\int_{A_{\mu k}} w_2\right)^{1+\varepsilon}
 $$
 whence, replacing this bound into \eqref{cond-integrales} and noting that $s_2-t=(s_1-s_2)\frac{1-\theta}{\theta}$, $\frac{\varepsilon}{r}=\frac{1}{p_1}\frac{1-\theta}{\theta}$, and $\frac{1+\varepsilon}{r}=\frac{1}{\theta p_2}$, the desired bound finally follows from condition \eqref{cond-TL}.
 
 Since $q_2\le p_2$ by the above assumption, we may replace $RF^{s_2}_{p_2,p_2}$ on the right hand side of \eqref{cota-interpolacion} by $RF^{s_2}_{p_2,q_2}$, and divide by $\|f\|_{RF_{p_2,q_2}^{s_2}(\Rn, w_2)}^\theta$ to obtain the bound
 
 $$
 \|f\|_{RF_{p_2,q_2}^{s_2}(\Rn, w_2)} \le C \|f\|_{RF_{p_1,q_1}^{s_1}(\Rn, w_1)}.
$$
Notice that, in principle, this bound holds in the intersection $RF_{p_1,q_1}^{s_1}(\Rn, w_1) \cap RF_{p_2,q_2}^{s_2}(\Rn, w_2)$, but it can be extended by density to $RF_{p_1,q_1}^{s_1}(\Rn, w_1)$ (see \cite[Proof of Theorem 1.2]{MV2}).
 
 It remains to prove that the embedding is compact. To this end, let $(f_k)_{k\in \N}$ be such that $\|f_k\|_{RF_{p_1,q_1}^{s_1}(w_1)} \le C$. Then, by the embedding we have already proved,  $(f_k)_{k\in\N}$ is also bounded in $RF^{s_2}_{p_2,q_2}(w_2)$ with $q_2 \le \min\{p_1, p_2\}$ and, by Theorem \ref{elementary-embeddings} in $RB^{s_2}_{p_2,p_2}(w_2)$. Since, under our hypotheses, the embedding $RB^t_{r,r}(v) \hookrightarrow RB^{s_2}_{p_2,p_2}(w_2)$ is compact by Theorem \ref{main-theorem}, we have that $f_k \to f$ in  $B_{r,r}^t=F_{r,r}^t$. Then, 
 $$
\|f_k - f\|_{RF^{s_2}_{p_2,q_2}(w_2)} \le \|f_k - f\|_{RF^{s_1}_{p_1,q_1}(w_1)}^{1-\theta} \|f_k - f\|_{RF^{t}_{r,r}(v)}^\theta   \to 0 $$
which proves our statement.

\end{proof}

Examples for the same weights considered in the Besov case can be obtained in an analogous manner. We leave the proofs to the reader. 

An interesting special case of the inhomogenous Triebel-Lizorkin spaces is given by the Bessel potential spaces. In  \cite{DD6} the first two authors proved (with a more elementary argument) the following   result.

\begin{ex}\cite[Theorems 6.4 and 7.2]{DD6}
Let $1<p<\infty$, $0<s<\frac{n}{p}$, $p \leq q \leq p^*_c = \frac{p(n+c)}{n-sp}$.
Then we have a continuous embedding
\be
H^{s,p}_{rad} (\mathbb{R}^n) \subset L^q(\mathbb{R}^n,|x|^c dx)
\label{our-embedding}
\ee
provided that
\be
-sp < c < \frac{(n-1)(q-p)}{p}
\label{cota-del-peso}
\ee
Morevover, the embedding is compact when $p<q<p^*_c$.
\end{ex}

\begin{proof}
To see this result as a special case of the  embeddings  in Theorem \ref{main-theorem-TL},  notice that $H^{s,p}_{rad}=RF^s_{p,2}$ and $L_{rad}^q(|x|^c)=RF^0_{q,2}(|x|^c)$ provided $|x|^c \in A_q$ (that is, $-n<c<n(q-1))$. Hence, this case corresponds to the choice $w_1=1$, $w_2=|x|c$, $p_1=p$, $q_1=2$, $p_2=q$, and $q_2=2$. Moreover, since we are interested in the case $q\ge p$, this implies $p^*=\infty$, while $q^*=\infty$ by the choice of spaces. Therefore, we obtain $c< \frac{(n-1)(q-p)}{p}$ and $q<\frac{p(c+n)}{n-sp}$. The remaining conditions $c>-sp$ and $s<\frac{n}{p}$ are needed to have a non-empty interval of admissible values of $q$.
\end{proof}

A different proof of the previous example for $p=2$ was also given in \cite{DDD2} by the first two authors jointly with R. Dur\'an, where that result was used to analyze the existence of radial solutions of a weighted elliptic system with hamiltonian structure in $\Rn$.

\end{document}